\documentclass[a4paper,12pt,reqno]{amsart}
\usepackage{color}
\usepackage{txfonts}
\usepackage{bbm}
\usepackage{cases}
\usepackage{graphicx}
\usepackage{mathrsfs}
\usepackage{ulem}
\usepackage{stmaryrd}
\usepackage{amsfonts}
\usepackage{enumerate,amsmath,amssymb,amsthm,booktabs,url}
\usepackage[nobysame]{amsrefs}
\BibSpec{article}{%
+{}{\PrintAuthors} {author}
+{,}{ \textrm} {title}
+{.}{ \textit} {journal}
+{,}{ \textbf} {volume}
+{}{ \parenthesize} {date}
+{,}{ } {pages}
+{.}{ Available at arXiv:} {eprint}
+{.}{} {transition}
}
\BibSpec{book}{%
+{}{\PrintAuthors} {author}
+{,}{ \textit} {title}
+{.}{ \textrm} {series} 
+{,}{ Vol.} {volume} 
+{.}{ } {publisher}
+{,}{ } {date}
+{.}{} {transition}
}

\usepackage[colorlinks=true]{hyperref}
\hypersetup{
    linkcolor=blue,          
    citecolor=red,        
    filecolor=blue,      
    urlcolor=cyan  
}

\textwidth=158 mm
\textheight=230 mm
\oddsidemargin=0mm
\evensidemargin=0mm
\topmargin=-3mm
\numberwithin{equation}{section}


\usepackage{thmtools} 

\declaretheorem[name=Theorem, parent=section]{theorem}
\declaretheorem[name=Lemma, sibling=theorem]{lemma}
\declaretheorem[name=Definition, sibling=theorem]{definition}

\declaretheorem[name=Corollary, sibling=theorem]{corollary}
\declaretheorem[name=Remark, sibling=theorem]{remark}
\declaretheorem[name=Example,sibling=theorem]{example}

\def\bt{\begin{theorem}}
\def\et{\end{theorem}}
\def\bl{\begin{lemma}}
\def\el{\end{lemma}}
\def\bd{\begin{definition}}
\def\ed{\end{definition}}
\def\bp{\begin{proposition}}
\def\ep{\end{proposition}}
\def\bc{\begin{corollary}}
\def\ec{\end{corollary}}
\def\br{\begin{remark}}
\def\er{\end{remark}}
\def\bx{\begin{example}}
\def\ex{\end{example}}


\def\p{\partial}

\def\[{{\Big[}}
\def\]{{\Big]}}
\def\<{{\langle}}
\def\>{{\rangle}}
\def\({{\Big(}}
\def\){{\Big)}}

\def\bx{{\mathbf{x}}}

\def\dif{{\mathord{{\rm d}}}}

\def\min{{\mathord{{\rm min}}}}

\def\={&\!\!=\!\!&}

\def\bB{{\mathbf B}}
\def\bC{{\mathbf C}}

\def\1{{\mathbf{1}}}

\def\cB{{\mathcal B}}

\def\cT{{\mathcal T}}

\def\mA{{\mathbb A}}
\def\mB{{\mathbb B}}
\def\mC{{\mathbb C}}

\def\mE{{\mathbb E}}

\def\mI{{\mathbb I}}

\def\mL{{\mathbb L}}

\def\mN{{\mathbb N}}

\def\mR{{\mathbb R}}
\def\mS{{\mathbb S}}

\def\sA{{\mathscr A}}
\def\sB{{\mathscr B}}

\def\sI{{\mathscr I}}

\def\sL{{\mathscr L}}

\def\sS{{\mathscr S}}

\def\geq{\geqslant}
\def\leq{\leqslant}
\def\ge{\geqslant}
\def\le{\leqslant}

\def\e{{\rm e}}

\def\eps{\varepsilon}

\def\p{\partial}

\def\[{{\Big[}}
\def\]{{\Big]}}
\def\<{{\langle}}
\def\>{{\rangle}}
\def\({{\Big(}}
\def\){{\Big)}}
\def\geq{\geqslant}
\def\leq{\leqslant}

\def\bx{{\mathbf{x}}}

\def\dif{{\mathord{{\rm d}}}}

\def\min{{\mathord{{\rm min}}}}

\def\={&\!\!=\!\!&}

\usepackage{titletoc}
\allowdisplaybreaks

\def\bpf{\begin{proof}}
\def\epf{\end{proof}}

\begin{document}
	
\title{Schauder's estimates  for nonlocal equations with singular L\'evy measures}
\date{\today}
\author{Zimo Hao, Zhen Wang and Mingyan Wu}

\thanks{{\it Keywords: }Schauder estimate, Littlewood-Paley's decomposition, Heat kernel, Supercritical non-local equation}


\address{School of Mathematics and Statistics, Wuhan University, Wuhan, Hubei 430072, P.R.China 
\\ Emails: zimohao@whu.edu.cn, wangzhen881025@163.com, mywu@whu.edu.cn}

\begin{abstract}
In this paper, we establish Schauder's  estimates  for the following  non-local equations in $ \mR^d $ :
$$
\p_tu=\sL^{(\alpha)}_{\kappa,\sigma} u+b\cdot\nabla u+f,\ u(0)=0,
$$
where $\alpha\in(1/2,2)$ and $ b:\mR_+\times\mR^d\to\mR$ is an unbounded local $\beta$-order H\"older function in $ x $ uniformly in $ t $, and $\sL^{(\alpha)}_{\kappa,\sigma}$ is a non-local $\alpha$-stable-like operator with form:
\begin{align*}
\sL^{(\alpha)}_{\kappa,\sigma}u(t,x):=\int_{\mR^d}\(u(t,x+\sigma(t,x)z)-u(t,x)-\sigma(t,x)z^{(\alpha)}\cdot\nabla u(t,x)\)\kappa(t,x,z)\nu^{(\alpha)}(\dif z),
\end{align*}
where $z^{(\alpha)}=z\1_{\alpha\in(1,2)}+z\1_{|z|\leq 1}\1_{\alpha=1}$, $ \kappa:\mR_+\times\mR^{2d}\to\mR_+ $ is bounded from above and below, $ \sigma:\mR_+\times\mR^{d}\to \mR^d\otimes \mR^d$ is a $ \gamma $-order H\"older continuous function in $ x $ uniformly in $ t $, and $ \nu^{(\alpha)} $ is a singular non-degenerate $ \alpha $-stable L\'evy measure.
\end{abstract}

\maketitle

\section{Introduction}
Let $b$ be a measurable vector-valued function on $\mR^d$, and $a$ be a measurable symmetric matrix-valued function on $\mR^d$. Denote by $\p_i$ the $i$-th partial derivative $\frac{\p}{\p x_i}$. Consider the following elliptic equation:
\begin{align}\label{in-EE}
a_{i,j}\p_i\p_ju+b_i\p_iu =f.
\end{align}
Here and below we use the Einstein summation convention. Suppose that $f$ belongs to $\bC^{\beta }$, where $\bC^\beta$ stands for the global H\"{o}lder spaces (see Subsection \ref{BHs}). Assume that there is a constant $\displaystyle \lambda >0$ such that $a$ is strictly elliptic, i.e.,
\begin{align*}
\xi_i a_{i,j}\xi_j\ge\lambda|\xi|^2,\quad\forall\xi\in\mR^d,
\end{align*}
and the H\"{o}lder norms of coefficients are all bounded by another constant $ \Lambda>0 $, i.e.,
\begin{align*}
\|a\|_{\bC^\beta}+\|b\|_{\bC^\beta}\leq \Lambda.
\end{align*}
Then, Schauder's estimates tell us that there is a constant $c=c(d,\beta,\lambda,\Lambda)>0$ such that for any solution $u\in \bC^{2+\beta}$ of  \eqref{in-EE},
\begin{align*}
\|u\|_{\bC^{2+\beta}}\leq c(\|u\|_{L^\infty}+\|f\|_{\bC^\beta}).
\end{align*}
It is well-known that Schauder's estimates play a basic role in constructing the classical solution for quasilinear PDEs, and also give an approach to show the well-posenesses of SDEs (see \cite{Wa-Zh16}, \cite{Cha-Ho-Me2}, \cite{Ha-Wu-Zh}, \cite{Ch-Zh19}, etc.). For heat equations, we can find many ways to prove such an estimate, such as \cite{Gi-Tr}, \cite{Kr96}, \cite{Kr-Pr17}, and so on.  A natural question is whether Schauder's estimates hold when we replace the local operator $a_{i,j}\p_i\p_j$ by some non-local ones?  These problems are drawn great interests recently (see \cite{Ba09}, \cite{Ba-Ka}, \cite{Do-Ki}, \cite{Im-Ji-Sh}, and \cite{Fr19}).

\vspace{0.5em}

In this paper, we consider the following equation:
\begin{align}\label{in-PDE}
\p_tu=\sL^{(\alpha)}_{\kappa,\sigma}u+b\cdot\nabla u+f,\quad u(0)=0,
\end{align}
where $ b:\mR_+\times\mR^d\to\mR^d$ is measurable, $b\cdot\nabla=b_i\p_i$, and $\sL^{(\alpha)}_{\kappa,\sigma}$ is an $\alpha$-stable-like operator with form:
\begin{align}\label{LL}
\sL^{(\alpha)}_{\kappa,\sigma}u(t,x):=\int_{\mR^d}\(u(t,x+\sigma(t,x)z)-u(t,x)-\sigma(t,x)z^{(\alpha)}\cdot\nabla u(t,x)\)\kappa(t,x,z)\nu^{(\alpha)}(\dif z),
\end{align}
where $z^{(\alpha)}=z\1_{\alpha\in(1,2)}+z\1_{|z|\leq 1}\1_{\alpha=1}$ with $ \alpha\in(0,2) $, $\sigma:\mR_+\times\mR^d\to\mR^d\otimes\mR^d$ and $ \kappa:\mR_+\times\mR^{2d}\to\mR $ are measurable, and $ \nu^{(\alpha)} $ is a non-degenerate $ \alpha $-stable L\'evy measure which can be very singular (see Subsection \ref{LE}). 
\vspace{0.5em}

Throughout this paper, we make the following assumptions on $\kappa,\sigma$ and $b$: 

\vspace{0.5em}

\begin{enumerate}[\bf (H$^{\beta}_{\kappa}$)]
    \item For some $c_0\geq 1$ and $\beta\in[0,1]$, it holds that for all $t\geq 0$ and $x,y,z\in\mR^d$,
    $$
    c_0^{-1}\leq\kappa(t,x,z)\leq c_0,\ \ |\kappa(t,x,z)-\kappa(t,y,z)|\leq c_0|x-y|^\beta.
    $$
    and in the case of $ \alpha=1 $,
    \begin{align*}
    \int_{r\leq |z| \leq R}z \kappa(t,x,z) \nu^{(\alpha)}(\dif z)=0 \ \ \hbox{for every } 0<r<R<\infty.
    \end{align*}
\end{enumerate}
\begin{enumerate}[\bf (H$^{\gamma}_{\sigma}$)]
    \item For some  $c_0\geq 1$ and $\gamma\in[0,1]$, it holds that  for all $t\geq 0$ and $x,\xi\in\mR^d$,
    $$
    c_0^{-1}|\xi|^2\leq |\sigma(t,x)\xi|^2 \leq c_0|\xi|^2,\ \ \|\sigma(t,x)-\sigma(t,y)\|\leq c_0 |x-y|^\gamma.
    $$
\end{enumerate}  
\begin{enumerate}[\bf (H$^{\beta}_{b}$)]
    \item For some $c_0\geq 1$ and $\beta\in[0,1]$, it holds that for all $t\geq 0$ and $x,y\in\mR^d$ with $ |x-y|\leq 1 $,
    $$
    |b(t,0)|\leq c_0,\ \ |b(t,x)-b(t,y)|\leq c_0 |x-y|^\beta.
    $$
\end{enumerate}
Here, $\|\cdot\|$ denotes the Hilbert-Schmidt norm of a matrix, and $|\cdot|$ denotes the Euclidean norm. Notice that $\sL^{(\alpha)}_{\kappa,\sigma}u(t,x)$ is meaningful when $u(t,\cdot)\in\bC^{\gamma}$ for some $\gamma>\alpha$.

\vspace{0.5em}

From the view point of PDEs, the drift term, instead of the diffusion term, plays a dominant role in the supercritical case $\alpha\in(0,1)$. There are several works to study Schauder estimate of PDE \eqref{in-PDE} when $\alpha\in(0,1)$, such as \cite{Pr12}, \cite{Si12}, \cite{Ch-Zh-Zh}, \cite{Ch-So-Zh}, \cite{Zh-Zh18}, \cite{Cha-Me-Pr}, and so on.  Particularly, when $\sigma$ is the identity matrix $\mI$, $\kappa\equiv1$, and $\nu^{(\alpha)}(\dif z)=1/|z|^{d+\alpha}\dif z$   with $\alpha\in(0,1)$, i.e. $\sL^{(\alpha)}_{\kappa,\sigma}=\Delta^{\frac{\alpha}{2}}$, Silvestre \cite{Si12} obtained an interior Schauder estimate under some H\"{o}lder continuous and bounded drifted coefficents. Moreover, Zhang and Zhao \cite{Zh-Zh18} studied Schauder's estimates for PDE \eqref{in-PDE} with L\'evy measure $\nu^{(\alpha)}(\dif z)=1/|z|^{d+\alpha}\dif z$. In addition, for singular L\'evy measures, Chen, Zhang and Zhao \cite{Ch-Zh-Zh} showed a Besov-type apriori estimate: for every $p>d/(\alpha+\beta-1)$, there is a constant $c>0$ such that
$$
\|u\|_{L^\infty ([0,T]; \bB^{\alpha+\beta}_{p,\infty})}\leq c \|f\|_{L^\infty ([0,T]; \bB^{\beta}_{p,\infty})},
$$
where $\bB^{\alpha+\beta}_{p,\infty}$ is the usual Besov space (see \autoref{BE} below), and the drift $b\in L^\infty([0,T];\bB_{p,\infty}^\beta)$ with $p\neq \infty$. Recently, under {\bf (H$^{\beta}_{b}$)} with $\alpha\in(\frac{1}{2},1)$ and $\alpha+\beta>1$, and $\sigma\equiv \mI$, Chaudru de Raynal, Menozzi and Priola \cite{Cha-Me-Pr} proved the following Schauder estimate for PDE \eqref{in-PDE}: 
\begin{align*}
\|u\|_{L^\infty([0,T];\bC^{\alpha+\beta})}\leq c \|f\|_{L^\infty([0,T];\bC^{\beta})}.
\end{align*}
It was not known untill now if the above Schauder's estimates hold when $\alpha\in(0,1)$ and $\sigma$ depends on $x$.
 
\vspace{0.5em}

 In the sequel, use $:=$ as a way of definition. For a Banach space $ \mB $ and $T>0$, we denote
$$
\mL_T^\infty(\mB):=L^\infty([0,T];\mB),\ \ \mL^\infty_{loc}(\mB):=\cap_{T>0} \mL_T^\infty(\mB),\ \ \mL_T^\infty:=L^\infty([0 ,T]\times \mR^d).
$$
Additionally, $a\vee b:=\max(a,b)$, $a\wedge b:=\min(a,b)$. The aim of this paper is to prove the following theorem which gives Schauder estimate for PDE \eqref{in-PDE} and the existence of classical solutions (see \autoref{DEF} below).


\begin{theorem}\label{SEA}
    Suppose that $ \alpha\in(1/2,2) $, $\gamma\in(\frac{1-\alpha}{\alpha}\vee 0,1]$, $\beta\in((1-\alpha)\vee0,(\alpha\wedge1)\gamma)$, and $\alpha+\beta\notin\mN$. Under {\bf (H$^{\beta}_{\kappa}$)}, {\bf (H$^{\gamma}_{\sigma}$)}, and {\bf (H$^{\beta}_{b}$)}, for any $f \in \mL_{loc} (\bC^\beta)$,  there is a unique classical solution $u$ in the sense of \autoref{DEF} such that for any $ T>0 $ and some constant $ c=c(T,c_0,d,\alpha,\beta,\gamma)>0 $, 
    \begin{align}\label{eq:U01}
    \|u\|_{\mL_T^\infty(\bC^{\alpha+\beta})}\leq c \|f\|_{\mL_T^\infty(\bC^{\beta})},\ \ \|u\|_{\mL^\infty_T}\leq T\|f\|_{\mL^\infty_T}.
    \end{align}
\end{theorem}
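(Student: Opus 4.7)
The plan is to establish the a priori Schauder estimate first, then obtain existence via approximation and uniqueness via the maximum principle. The backbone of the a priori estimate is the freezing-coefficients technique combined with a Littlewood--Paley / paraproduct analysis: decompose $u = \sum_j \Delta_j u$, rewrite the equation for each block as a frozen constant-coefficient model equation plus a commutator remainder, and sum the resulting bounds using smoothing estimates for the frozen semigroup.

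First I would analyze the frozen operator $\sL^{(\alpha)}_{\kappa_0,\sigma_0}$ where $\kappa_0(z) = \kappa(t_0,x_0,z)$ and $\sigma_0 = \sigma(t_0,x_0)$ are constants. Using Fourier techniques together with the nondegeneracy of $\sigma_0$ and of $\nu^{(\alpha)}$, one should derive block-wise smoothing bounds of the form $\|\Delta_j P_t^{(t_0,x_0)} g\|_\infty \lesssim \e^{-c t 2^{j\alpha}} \|\Delta_j g\|_\infty$. Combined with Duhamel's formula, these yield the Schauder estimate $\|u\|_{\mL^\infty_T(\bC^{\alpha+\beta})} \lesssim \|f\|_{\mL^\infty_T(\bC^\beta)}$ in the constant-coefficient case, while the $L^\infty$ bound $\|u\|_{\mL^\infty_T} \le T \|f\|_{\mL^\infty_T}$ follows from a time integration once the maximum principle is in hand.

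For the variable-coefficient estimate, I would represent $\Delta_j u$ as the solution of a frozen equation whose right-hand side is $\Delta_j f + [\Delta_j, \sL^{(\alpha)}_{\kappa,\sigma}] u + [\Delta_j, b\cdot\nabla] u$, with the freezing point chosen depending on $j$. The commutator terms are controlled by the H\"older regularities of $\kappa, \sigma, b$; the hypothesis $\beta < (\alpha \wedge 1)\gamma$ quantifies the smallness of the $\sigma$-perturbation so it can be absorbed into the leading Schauder norm after summing over $j$. For the drift, when $\alpha \in (1,2)$ the term $b\cdot\nabla u$ is subcritical and can be bounded directly in $\bC^\beta$. When $\alpha \in (1/2,1]$ it is critical or supercritical; the condition $\alpha+\beta>1$ ensures $\nabla u$ is H\"older continuous, and a paraproduct decomposition $b\cdot\nabla u = T_b\nabla u + T_{\nabla u} b + R(b,\nabla u)$ allows the low--high piece to be merged with the principal transport term while the remaining pieces provide genuine $\bC^\beta$ sources.

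The main obstacle is the combination of the possibly singular L\'evy measure $\nu^{(\alpha)}$, the $x$-dependent coefficient $\sigma$, and the supercritical drift: since $\nu^{(\alpha)}$ can be very singular, the frozen semigroup need not admit pointwise off-diagonal kernel bounds, so the whole analysis must be performed at the level of Littlewood--Paley blocks. The commutator $[\Delta_j, \sL^{(\alpha)}_{\kappa,\sigma}]$ is delicate because of the anisotropy introduced by the $\sigma(x)z$ increment; handling it requires expanding $u(x+\sigma(x)z) - u(y+\sigma(y)z)$ and carefully exploiting the compensation built into the definition of $\sL^{(\alpha)}_{\kappa,\sigma}$ to recover an extra factor of $|x-y|^\gamma$. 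Once the a priori estimate is in place, existence is obtained by mollifying $\kappa, \sigma, b, f$, solving the smooth problem by classical semigroup theory, and passing to the limit using the uniform Schauder bound; uniqueness follows from the comparison principle applied to the difference of two solutions.
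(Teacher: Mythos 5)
Your overall framework (Littlewood--Paley blocks, freezing coefficients, Duhamel, commutators) is close in spirit to the paper, and some of the ingredients you propose are valid alternatives to the paper's choices. In particular, the paper does not use a time-exponential smoothing bound $\|\Delta_j P_t g\|_\infty\lesssim \e^{-ct2^{j\alpha}}\|\Delta_j g\|_\infty$: its central tool is instead the integrated moment-type kernel estimate $\int_0^t\int_{\mR^d}|x|^\beta|\Delta_j p_{s,t}(x)|\,\dif x\,\dif s\lesssim 2^{-(\alpha+\beta)j}$ from \autoref{CRU}. The paper also does not commute $\Delta_j$ with $\sL^{(\alpha)}_{\kappa,\sigma}$ at all in the main argument: it freezes the coefficients along the characteristic curve $\dot\theta_t=-b(t,\theta_t)$, translates to a moving frame, and evaluates $\Delta_j\tilde u(t,\cdot)$ at the single point $x=0$. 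There the perturbation terms $(\sL^{(\alpha)}_{\tilde\kappa,\tilde\sigma}-\sL^{(\alpha)}_{\tilde\kappa_0,\tilde\sigma_0})\tilde u$ and $\tilde b\cdot\nabla\tilde u$ vanish to orders $|x|^\beta$, $|x|^{(\alpha\wedge1)\gamma}$ near $0$, and the moment estimate converts this spatial vanishing into the desired extra $2^{-j\beta}$ decay, bypassing the delicate commutator $[\Delta_j,\sL^{(\alpha)}_{\kappa,\sigma}]$ entirely.

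There is a genuine gap in your treatment of the drift. You write a paraproduct decomposition $b\cdot\nabla u = T_b\nabla u + T_{\nabla u}b + R(b,\nabla u)$ and claim the pieces land in $\bC^\beta$. This presumes $b\in\bC^\beta$ globally, but {\bf (H$^{\beta}_{b}$)} only asserts local $\beta$-H\"older regularity together with a bound on $|b(t,0)|$; the drift may have linear growth (indeed $b(x)=x$ is admissible, and the paper stresses this). For such $b$ neither $T_{\nabla u}b$ nor $R(b,\nabla u)$ is controlled in $\bC^\beta$, and simply ``merging the low--high piece with the transport term'' does not rescue the estimate. The paper's resolution is structural: after recentering along the characteristic curve, $\tilde b(t,x)=b(t,x+\theta_t)-b(t,\theta_t)$ is $O(|x|^\beta)$ near $0$, and the localized drift $\chi\tilde b$ is a genuine bounded $\bC^\beta$ function (\autoref{CutF}); one then pays the price of a commutator $[\chi,\sL^{(\alpha)}_{\kappa,\sigma}]\tilde u$, which is controlled in \autoref{COM}. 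Your plan has no analogue of this localization, so it would not establish the theorem as stated for unbounded $b$.

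A secondary point: the commutator $[\Delta_j,\sL^{(\alpha)}_{\kappa,\sigma}]u$ with variable $\sigma$ would need to produce the precise extra decay $2^{-j(\alpha\wedge1)\gamma}\ge 2^{-j\beta}$, which, in the paper, comes out of the interplay between the $|x|^\gamma$ modulus of $\sigma-\sigma_0$ and the $\bC^{\alpha\pm\eps}$-interpolation trick in the proof of \autoref{LeC06} (and, in the $\alpha>1$ case, of \autoref{COM} through \autoref{INTER}). Your sketch names the right quantities but does not indicate how to extract this particular exponent through a block-commutator argument with a singular $\nu^{(\alpha)}$, and I am not convinced the ``absorb after summing over $j$'' step closes without something equivalent to the moment estimate \eqref{CruF}. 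Finally, for existence, the paper uses a continuity method interpolating between $\sL^{(\alpha)}_{1,\sigma}$ and $\sL^{(\alpha)}_{\kappa,\sigma}$ together with mollified $\sigma_n$ and truncated $b_n$; ``classical semigroup theory for the smooth problem'' is not available here because even with smooth data the nonlocal generator with variable $\sigma$ does not have an explicit semigroup, so some approximation scheme is still needed.
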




We point out that there are few results of heat kernel estimates for the operator $\p_t-\sL^{(\alpha)}_{\kappa,\sigma}$ when $\nu^{(\alpha)}(\dif z)\ne 1/|z|^{d+\alpha}\dif z$ and $\sigma$ is not a constant. Hence, it seems to be quite difficult to obtain Schauder's estimates as stated in \autoref{SEA} by using methods from \cite{Cha-Me-Pr}.  A key ingredient in our approach is the use of Littlewood-Paley's theory.


\begin{remark}
Notice that $b(x)=x$ satisfies the condition {\bf (H$^{\beta}_{b}$)} for any $\beta\in[0,1]$. Hence, \autoref{SEA} covers some unbounded drift cases.
\end{remark}

\br
In the \autoref{SEA}, $\alpha$ is required to be greater than $1/2$ due to some moment problems (see \autoref{R38} ). This restriction also appears in \cite{Cha-Me-Pr}. An open problem is to drop the restriction $\alpha>1/2$. If $\sL^{(\alpha)}_{\kappa,\sigma}=\Delta^{\alpha/2}$, i.e. $\sigma\equiv\mI$, $\kappa\equiv1$, we can drop the restriction $\alpha>1/2$, and then obtain Schauder's estimates for $\alpha\in(0,2)$ in our method( see \autoref{R310} ).
\er

This paper is organized as follows: In Section \ref{Pre}, we introduce some basic function spaces, and present the estimates of Littlewood-Paley's types for heat kernels of nonlocal operators with constant coefficients (see \autoref{CRU} below). In Section \ref{SSS}, we show the the maximum principle \autoref{MM}, and prove Schauder's estimates \autoref{SFA} for PDE \eqref{in-PDE} by freezing coefficients along the characterization curve. In Section \ref{EX}, through the continuity method, we apply the apriori estimate \autoref{SFA} to show the main result \autoref{SEA}.

\vspace{0.5em}

Throughout this paper, we shall use the following conventions and notations:
\begin{itemize}
\item
We use $A\lesssim B$ to denote $A\le c B$ for some unimportant constant $c>0$.
\item $ \mN_0:=\mN \cup \{0\}$, $ \mR_+:=[0,\infty) $, and for $R\geq0$, we shall denote $B_R:=\{x\in\mR^d: |x|<R\}$.
\item
If $\Omega$ is a domain in $\mR^d$, for any $p\in[1,\infty]$, we denote by  $L^p(\Omega)$ the space of $p$-summable function on $\Omega$ . Denote $L^p:=L^p(\mR^d)$ with the norm denoted $\|\cdot\|_p$.
\item
For two operators $\sA_1,\sA_2$, we use $[\sA_1,\sA_2]:=\sA_1\sA_2-\sA_2\sA_1$ to denote their commutator. 
\end{itemize}

\section{Preliminaries}\label{Pre}

\subsection{H\"{o}lder spaces and Besov spaces}\label{BHs}

We first introduce the H\"older spaces. For $h\in\mR^d$ and $f:\mR^d\to\mR$, the first order difference operator is defined by
$$
\delta_hf(x):=f(x+h)-f(x).
$$
For $0<\beta\notin \mN_0$, if $\Omega$ is a domain in $\mR^d$, let $\bC^{\beta}(\Omega)$ be the usual $\beta$-order H\"older space on $\Omega$ consisting of all functions $f:\Omega\to\mR$ with
$$
\|f\|_{\bC^\beta(\Omega)}:=\|f\|_{L^\infty(\Omega)}+\cdots+\|\nabla^{[\beta]} f\|_{L^\infty(\Omega)}+[\nabla^{[\beta]}  f]_{\bC^{\beta-[\beta]}(\Omega)}<\infty,
$$
where $[\beta]$ denotes the greatest integer less than $\beta$, and $\nabla^j$ stands for the $j$-order gradient, and
$$
[f]_{\bC^\gamma(\Omega)}:=\sup_{\substack{x,x+h\in\Omega\\ h\ne0}}|\delta_h f(x)|/|h|^{\gamma},\ \gamma\in[0,1).
$$ 
Denote $\bC^\beta:=\bC^\beta(\mR^d)$ and $[\cdot]_{\bC^\beta}:=[\cdot]_{\bC^\beta(\mR^d)}$.
We introduce another notation:
$$
[f]_{\mC^\gamma}:=\sup_{0<|h|\leq 1}\|\delta_h f\|_\infty/|h|^{\gamma},\ \gamma\in[0,1).
$$
For any integer $n\geq 1$, define $\bC^n$ be the set of  $n$-order continuous differentiable functions on $\mR^d$ with  
$$
\|f\|_{\bC^n}:= \sum_{k=1}^n \|\nabla^n f\|_{L^\infty(\mR^d)}< \infty.
$$
If $f$ belongs to $\bC^1$, then it is Lipschtiz.

\begin{remark}
	For $0<s<1$, note that the set consisting of all functions whose $\mC^s$-seminorms are finte is bigger than $\bC^s$-seminorms. For example, let $f(x)=x$,  then $[f]_{\mC^s}<\infty$ but $[f]_{\bC^s}=\infty$. This fact tells us that for some unbounded functions, their $ \mC^s $-seminorms can be finite.
\end{remark}

The following result is simple but important (see \cite{Wa-Zh16}*{Lemma2.1}). 

\begin{lemma}\label{HB}
    If $0<s<1$, then for any $x,y\in\mR^d$ with $|x-y|>1$,
    \begin{align*}
    |f(x)-f(y)|\le 2[f]_{\mC^s}|x-y|.
    \end{align*}
\end{lemma}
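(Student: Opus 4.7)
The plan is a standard chaining argument, exploiting that the $\mC^s$-seminorm controls increments of step size at most $1$. The obstacle, such as it is, is purely bookkeeping: we need to split the segment $[x,y]$ into enough pieces so that each step has length $\le 1$, while not using so many pieces that the sum of $s$-th powers blows up.

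Concretely, I would set $N:=\lceil |x-y|\rceil$, so that $N\ge |x-y|$ and $N\le |x-y|+1$. Because $|x-y|>1$, the latter gives $N\le 2|x-y|$. Define the interpolating points
\[
x_k:=x+\frac{k}{N}(y-x),\qquad k=0,1,\ldots,N,
\]
and note that each consecutive step has length $h_k:=x_{k+1}-x_k$ satisfying $|h_k|=|x-y|/N\le 1$. Hence each $\delta_{h_k}f$ is controlled by the $\mC^s$-seminorm.

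Summing via the triangle inequality,
\[
|f(x)-f(y)|\;\le\;\sum_{k=0}^{N-1}|f(x_{k+1})-f(x_k)|\;\le\;N\,[f]_{\mC^s}\bigl(|x-y|/N\bigr)^s\;=\;[f]_{\mC^s}\,N^{1-s}\,|x-y|^s.
\]
Using $N\le 2|x-y|$ and $1-s\in(0,1)$, we bound $N^{1-s}\le (2|x-y|)^{1-s}=2^{1-s}|x-y|^{1-s}$, which yields
\[
|f(x)-f(y)|\;\le\;2^{1-s}\,[f]_{\mC^s}\,|x-y|\;\le\;2\,[f]_{\mC^s}\,|x-y|,
\]
as required. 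The only place the hypothesis $|x-y|>1$ is used is to absorb the ``$+1$'' in $N\le |x-y|+1$ into a factor of $2$; otherwise the same argument shows that any $\mC^s$-function grows at most linearly at infinity.
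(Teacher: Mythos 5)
Your chaining argument is correct, and it is the standard — essentially the only — way to prove a statement of this type. The paper itself does not give a proof of this lemma; it simply cites Lemma 2.1 of Wang--Zhang (reference \cite{Wa-Zh16}), where the same subdivision-into-unit-steps argument appears. Your bookkeeping is sound: $N=\lceil |x-y|\rceil$ gives $|x-y|\le N\le |x-y|+1<2|x-y|$, each step $|h_k|=|x-y|/N\le 1$ falls within the range where $[f]_{\mC^s}$ applies, and the final bound $2^{1-s}\le 2$ closes the argument.
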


In the sequel, let $\chi:\mR^d\to\mR$ be a smooth function with
\begin{align}\label{CHI}
\chi(x)=
\begin{cases}
1 \quad\text{if $|x|\leq\frac{1}{4}$};\\
0\quad\text{if $|x|>\frac{1}{2}$}.
\end{cases}
\end{align}
Notice that $[f]_{\mC^s}<\infty$ is the local property of $f$ and $[f]_{\bC^s}<\infty$ is the global property. The following results are useful and their proofs are straightforward and elementary.
\begin{lemma}\label{CutF}
    For fixed $x_0\in \mR^d$, define
    $
    \tilde{f}(x):=(f(x)-f(x_0))\chi(x-x_0),\,\forall x\in\mR^d.
    $ 
    Then, for any $s\in(0,1)$, there exists a constant $ c=c(s,\chi)>0 $ such that
    $$ \|\tilde{f}\|_{\bC^s}\leq c[f]_{\mC^{s}}. 
    $$
\end{lemma}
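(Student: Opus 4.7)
The plan is to bound $\|\tilde{f}\|_{\bC^s} = \|\tilde f\|_\infty + [\tilde f]_{\bC^s}$ separately, exploiting that $\chi$ is supported in $\{|y|\leq 1/2\}$, so that $\tilde f$ only ``sees'' $f$ on a ball of radius $1/2$ about $x_0$ through its first factor. For the $L^\infty$ bound: since $\chi(x-x_0)=0$ whenever $|x-x_0|>1/2$, the product $\tilde f(x)$ is nonzero only for $|x-x_0|\leq 1/2\leq 1$, and then the definition of $[\cdot]_{\mC^s}$ yields $|f(x)-f(x_0)|\leq [f]_{\mC^s}|x-x_0|^s\leq 2^{-s}[f]_{\mC^s}$. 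Hence $\|\tilde f\|_\infty\leq 2^{-s}\|\chi\|_\infty[f]_{\mC^s}$.

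For the seminorm bound I would split on the size of the increment $h$. When $|h|>1$, the trivial inequality $|\tilde f(x+h)-\tilde f(x)|\leq 2\|\tilde f\|_\infty$ combined with $|h|^s\geq 1$ reduces the estimate to the previous step. For $|h|\leq 1$ I would use the product-rule decomposition
\begin{equation*}
\tilde f(x+h)-\tilde f(x)=[f(x+h)-f(x)]\chi(x+h-x_0)+[f(x)-f(x_0)]\bigl[\chi(x+h-x_0)-\chi(x-x_0)\bigr].
\end{equation*}
The first piece is bounded by $\|\chi\|_\infty[f]_{\mC^s}|h|^s$ directly from $[\cdot]_{\mC^s}$. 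In the second piece the bracket vanishes unless at least one of $|x-x_0|,|x+h-x_0|$ is $\leq 1/2$, in which case the triangle inequality forces $|x-x_0|\leq 1/2+|h|\leq 3/2$. Using the Lipschitz estimate $|\chi(x+h-x_0)-\chi(x-x_0)|\leq \|\nabla\chi\|_\infty|h|\leq \|\nabla\chi\|_\infty|h|^s$ (since $s<1$ and $|h|\leq 1$), it remains to control $|f(x)-f(x_0)|$ uniformly for $|x-x_0|\leq 3/2$.

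The only subtle point is the subrange $1<|x-x_0|\leq 3/2$: $f$ need not be bounded, and the direct estimate $[f]_{\mC^s}|x-x_0|^s$ is unavailable on distances exceeding $1$. This is precisely the situation covered by Lemma~\ref{HB}, which upgrades the local seminorm into a linear-growth control $|f(x)-f(x_0)|\leq 2[f]_{\mC^s}|x-x_0|\leq 3[f]_{\mC^s}$ on that annulus. Assembling all the cases yields the desired constant $c=c(s,\chi)$ with $\|\tilde f\|_{\bC^s}\leq c[f]_{\mC^s}$.
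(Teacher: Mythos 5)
Your proof is correct, and the overall strategy—bound $\|\tilde f\|_\infty$ from the support constraint, then estimate the seminorm via a product-rule decomposition of the increment—is the same as the paper's. The one point of divergence is organizational: the paper splits the seminorm estimate into cases according to whether $x$ and $y$ lie inside $B(x_0,1/2)$, so the factor $f(\cdot)-f(x_0)$ is always evaluated at distance less than $1$ from $x_0$ and Lemma~\ref{HB} is never invoked; your split on $|h|\leq 1$ versus $|h|>1$ instead pushes $x$ into the annulus $1<|x-x_0|\leq 3/2$, where you correctly fall back on Lemma~\ref{HB} to control the possibly unbounded $f$. Both routes are valid and yield the same estimate with a constant depending only on $s$ and $\chi$; the paper's case split is marginally leaner since it stays entirely in the range where the $\mC^s$-seminorm applies directly, while yours is arguably the more mechanical (and hence more robust) variant.
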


\begin{lemma}\label{FuHe}
	Assume that $ \gamma\in(0,1] $ and $ \beta\in(0,\gamma) $.
	Let $\phi:\mR^d\to\mR^d$ with $[\phi]_{\bC^\gamma}<\infty $, and let $f\in\bC^{\beta/\gamma}$. Then, $g(\cdot):=f(\cdot+\phi(\cdot))\in\bC^{\beta}$ with
	\begin{align*}
	\|g\|_{\bC^{\beta}}\leq 2\(1+[\phi]_{\bC^\gamma}^{\beta/\gamma}\)\|f\|_{\bC^{\beta/\gamma}}.
	\end{align*}
\end{lemma}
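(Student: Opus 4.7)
\textbf{Proof plan for Lemma \ref{FuHe}.}

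The statement is a classical composition estimate: $g = f \circ (\mathrm{id} + \phi)$ inherits $\beta$-H\"older regularity from the $(\beta/\gamma)$-H\"older regularity of $f$ once the inner map $\mathrm{id} + \phi$ is $\gamma$-H\"older. My plan is to estimate $\|g\|_\infty$ trivially and the seminorm $[g]_{\bC^\beta}$ by splitting the increment $|g(x+h)-g(x)|$ according to whether $|h|\le 1$ or $|h|>1$.

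First, since $|g(x)| = |f(x+\phi(x))| \le \|f\|_\infty$, the sup bound is immediate. For the seminorm, I start from
\[
|g(x+h)-g(x)| = |f(x+h+\phi(x+h)) - f(x+\phi(x))| \le [f]_{\bC^{\beta/\gamma}}\,\bigl|h + \phi(x+h)-\phi(x)\bigr|^{\beta/\gamma}.
\]
For $|h|\le 1$, I use $|h|\le |h|^\gamma$ (valid because $\gamma\in(0,1]$ and $|h|\le1$) together with $|\phi(x+h)-\phi(x)|\le [\phi]_{\bC^\gamma}|h|^\gamma$ to conclude that
\[
|h + \phi(x+h)-\phi(x)| \le (1+[\phi]_{\bC^\gamma})\,|h|^\gamma.
\]
Raising to the power $\beta/\gamma\in(0,1)$ and applying the subadditivity $(a+b)^s\le a^s+b^s$ for $s\in(0,1]$, I obtain
\[
|g(x+h)-g(x)| \le [f]_{\bC^{\beta/\gamma}}\bigl(1 + [\phi]_{\bC^\gamma}^{\beta/\gamma}\bigr)|h|^\beta.
\]

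For $|h|>1$, the H\"older bound becomes useless because $|h|^{\beta/\gamma}$ grows faster than $|h|^\beta$ when $\gamma<1$; instead I use the crude bound $|g(x+h)-g(x)|\le 2\|f\|_\infty$ and note that $|h|^\beta\ge 1$, which gives $|g(x+h)-g(x)|/|h|^\beta \le 2\|f\|_\infty$. Combining both cases and using $\|f\|_\infty\le \|f\|_{\bC^{\beta/\gamma}}$, I get
\[
[g]_{\bC^\beta} \le \max\Bigl\{(1+[\phi]_{\bC^\gamma}^{\beta/\gamma})[f]_{\bC^{\beta/\gamma}},\; 2\|f\|_\infty\Bigr\} \le 2(1+[\phi]_{\bC^\gamma}^{\beta/\gamma})\|f\|_{\bC^{\beta/\gamma}},
\]
and adding the $L^\infty$ part yields the claimed estimate (after a routine regrouping of constants).

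There is no real obstacle here: the only mildly delicate points are the switch from $|h|$ to $|h|^\gamma$ in the short-range regime (which uses $\gamma\le 1$ and $|h|\le 1$) and the subadditivity trick $(1+C)^{\beta/\gamma}\le 1+C^{\beta/\gamma}$, which is what allows the H\"older seminorm of $\phi$ to enter the final bound with exponent $\beta/\gamma$ rather than $1$. The large-$|h|$ regime is handled by boundedness, which is why the constant $2$ appears.
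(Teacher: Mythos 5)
Your argument matches the paper's own (commented-out) proof step for step: bound $\|g\|_\infty$ by $\|f\|_\infty$, estimate $|g(x+h)-g(x)|$ for $|h|\le1$ via the chain $|h+\phi(x+h)-\phi(x)|\le(1+[\phi]_{\bC^\gamma})|h|^\gamma$, raise to the power $\beta/\gamma$, and apply subadditivity $(1+C)^{\beta/\gamma}\le 1+C^{\beta/\gamma}$. You are in fact slightly more careful than the paper's sketch, which dismisses the regime $|h|>1$ with ``it suffices to consider $|h|\le1$''; you spell out the crude bound $2\|f\|_\infty$ there.

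One small blemish in the final bookkeeping: after you write $[g]_{\bC^\beta}\le 2(1+[\phi]_{\bC^\gamma}^{\beta/\gamma})\|f\|_{\bC^{\beta/\gamma}}$ and then ``add the $L^\infty$ part,'' the sum $\|g\|_\infty+[g]_{\bC^\beta}$ lands at $3(1+[\phi]_{\bC^\gamma}^{\beta/\gamma})\|f\|_{\bC^{\beta/\gamma}}$, not $2(\cdot)$; the ``routine regrouping'' does not quite recover the stated constant. A cleaner split is to treat the two regimes of $h$ separately in the total norm: for $|h|\le1$ one gets $\|g\|_\infty+\sup_{|h|\le1}\frac{|\delta_h g|}{|h|^\beta}\le\|f\|_\infty+(1+[\phi]_{\bC^\gamma}^{\beta/\gamma})[f]_{\bC^{\beta/\gamma}}\le(1+[\phi]_{\bC^\gamma}^{\beta/\gamma})\|f\|_{\bC^{\beta/\gamma}}$, and for $|h|>1$ one has $\|g\|_\infty+\sup_{|h|>1}\frac{|\delta_h g|}{|h|^\beta}\le 3\|f\|_\infty$, so it is the $\max$ of these rather than their sum that controls $\|g\|_{\bC^\beta}$. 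Even so, the exact numerical constant is irrelevant for every application of the lemma in the paper, and the substance of your proof is correct.
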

Next we are going to introduce the Besov spaces. Let $\sS(\mR^d)$ be the Schwartz space of all rapidly decreasing functions on $\mR^d$, and $\sS'(\mR^d)$ be the dual space of $\sS(\mR^d)$ 
called Schwartz generalized function (or tempered distribution) space. For any $f\in\sS(\mR^d)$, the Fourier transform $\hat f$ and the inverse Fourier transform $\check f$ are defined by
\begin{align*}
    \hat f(\xi)&:=(2 \pi)^{-d/2}\int_{\mR^d} \e^{-i\xi\cdot x}f(x)\dif x, \quad\xi\in\mR^d,\\
    \check f(x)&:=(2 \pi)^{-d/2} \int_{\mR^d} \e^{i\xi\cdot x}f(\xi)\dif\xi, \quad x\in\mR^d.
\end{align*}
For any $f\in\sS'(\mR^d)$, the Fourier transform $\hat f$ and the inverse Fourier transform $\check f$ are defined by
$$
\<\hat f,\varphi\>:=\<f,\hat\varphi\>,\quad\quad\<\check f,\varphi\>:=\<f,\check\varphi\>,\quad \forall \varphi\in\sS(\mR^d).
$$
Let $\phi_0$ be a radial $C^\infty$-function on $\mR^d$ with 
$$
\phi_0(\xi)=1\ \mbox{ for } \ |\xi|\leq 1\ \mbox{ and }\ \phi_0(\xi)=0\ \mbox{ for } \ |\xi|>2.
$$
For $\xi=(\xi_1,\cdots,\xi_n)\in\mR^d$ and $j\in\mN$, define 
$$
\phi_j(\xi):=\phi_0(2^{- j}\xi)-\phi_0(2^{-(j-1)}\xi).
$$
It is easy to see that for $j\in\mN$, $\phi_j(\xi)=\phi_1(2^{-(j-1)}\xi)\geq 0$ and
$$
{\rm supp}\phi_j\subset \{\xi\in \mR^d \mid 2^{j-1}\le|\xi|\le2^{j+1}\},\ \  \sum_{j=0}^{k}\phi_j(\xi)=\phi_0(2^{-k}\xi)\to 1,\ \ k\to\infty.
$$
In particular, if $|j-j'|\geq 2$, then
$$
\mathrm{supp}\phi_1(2^{-j}\cdot)\cap\mathrm{supp}\phi_1(2^{-j'}\cdot)=\emptyset.
$$
From now on we shall fix such $\phi_0$ and $\phi_1$.  For $j\in \mN_0$, the block operator $\Delta_j$ is defined on $\sS'(\mR^d)$ by
\begin{align}\label{BO}
    \Delta_jf(x):=(\phi_j\hat f)\check{\,\,}(x)=\check\phi_j* f(x)=2^{(j-1)d}\int_{\mR^d}\check\phi_1(2^{j-1}y) f(x-y)\dif y.
\end{align}

\br
For $j\in\mN_0$, by definitions it is easy to see that
\begin{align}\label{DE}
    \Delta_j=\Delta_j\widetilde\Delta_j,\ \mbox{ where }\ \widetilde\Delta_j:=\Delta_{j-1}+\Delta_{j}+\Delta_{j+1}\mbox{ with } \Delta_{-1}\equiv 0,
\end{align}
and $\Delta_j$ is symmetric in the sense that
$
\<\Delta_j f,g\>=\< f,\Delta_jg\>.
$
\er

Here is the definition for the Besov spaces.

\begin{definition}[Besov spaces]\label{BE}
    For any $s\in\mR$ and $p,q \in [1,\infty]$, the Besov space $\bB^s_{p,q}$ is defined as the set of all $f\in\sS'(\mR^d)$ such that
    $$
    \|f\|_{\bB^s_{p,q}}: = \(\sum_{j =0}^\infty 2^{2^{jqs}}  \|\Delta_j f \|_{p}^q\)^{1/q} < \infty.
    $$
    When $q=\infty$, it is in the following sense
    $$
    \|f\|_{\bB^s_{p,\infty}}:=\sup_{j\in \mN_0}2^{js}\|\Delta_j f\|_p<\infty.
    $$
\end{definition}

Recall the  Bernstein's inequality( \cite{Ba-Ch-Da}*{Lemma 2.1}). 

\begin{lemma}[Bernstein's inequality]
    For any $k=0,1,2,\cdots$, there is a constant $c=c(k,d)>0$ such that for all $j\geq 0$,
    \begin{align}\label{Bernstein}
    \|\nabla^k\Delta_j f\|_\infty\leq c2^{kj}\|\Delta_jf\|_\infty.
    \end{align}
\end{lemma}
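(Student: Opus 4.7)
The plan is to exploit the frequency localization of $\Delta_j f$: its Fourier transform is supported in the annulus $\{2^{j-1} \leq |\xi| \leq 2^{j+1}\}$ (and in the ball $\{|\xi|\leq 2\}$ when $j=0$), so the derivative operator $\nabla^k$ acting on $\Delta_j f$ is effectively multiplication on the Fourier side by symbols of size $\sim 2^{jk}$, which via a convolution representation plus Young's inequality gives the desired gain.

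Concretely, first I would fix an auxiliary smooth radial function $\psi\in C_c^\infty(\mR^d)$ with $\psi\equiv 1$ on $\{\tfrac{1}{2}\leq |\xi|\leq 4\}$ and supported in $\{\tfrac{1}{4}\leq |\xi|\leq 8\}$, so that for every $j\geq 1$ one has $\phi_j(\xi)=\phi_j(\xi)\,\psi(2^{-(j-1)}\xi)$. Taking Fourier transforms in \eqref{BO}, this yields the identity
\begin{align*}
\widehat{\nabla^k\Delta_j f}(\xi)=(i\xi)^{\otimes k}\psi(2^{-(j-1)}\xi)\,\widehat{\Delta_j f}(\xi),
\end{align*}
so that $\nabla^k\Delta_j f=h_j*\Delta_j f$, where $h_j$ is the inverse Fourier transform of $\xi\mapsto (i\xi)^{\otimes k}\psi(2^{-(j-1)}\xi)$.

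Next I would compute the $L^1$-norm of $h_j$ by a change of variables. Setting $h_1$ to be the inverse Fourier transform of $\xi\mapsto(i\xi)^{\otimes k}\psi(\xi)$, a direct scaling argument gives $h_j(x)=2^{(j-1)(d+k)}h_1(2^{j-1}x)$, and hence $\|h_j\|_{L^1}=2^{(j-1)k}\|h_1\|_{L^1}$. Since $h_1\in\sS(\mR^d)$ (as the inverse Fourier transform of a $C_c^\infty$ function), $\|h_1\|_{L^1}$ is finite and depends only on $k,d$ and the choice of $\psi$. Young's convolution inequality then gives
\begin{align*}
\|\nabla^k\Delta_j f\|_\infty\leq \|h_j\|_{L^1}\|\Delta_j f\|_\infty\leq c\, 2^{jk}\|\Delta_j f\|_\infty,
\end{align*}
for $j\geq 1$. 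For the remaining case $j=0$, I would just repeat the same argument using a compactly supported cutoff $\psi_0\in C_c^\infty(\mR^d)$ with $\psi_0\equiv 1$ on $\{|\xi|\leq 2\}$; then $2^{0\cdot k}=1$ and the same Young estimate applies.

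There is no real obstacle here; the only minor point is to treat $j=0$ separately because $\phi_0$ is supported in a ball rather than an annulus, and to choose the auxiliary bump $\psi$ so that its support contains the annular support of $\phi_1$ strictly. Once these choices are made, scaling and Young's inequality immediately produce the desired factor $2^{jk}$ with a constant depending only on $k$ and $d$.
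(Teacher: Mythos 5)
Your proof is correct and is exactly the standard argument (frequency-localization cutoff, dilation scaling of the convolution kernel, Young's inequality); the paper itself gives no proof but simply cites Bahouri--Chemin--Danchin, Lemma~2.1, where the same scaling-plus-Young argument is carried out. The one detail worth being slightly careful about, which you handled, is the separate treatment of $j=0$ with a ball cutoff rather than an annular one.
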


\begin{remark}\label{DF}It is well-known that for any  $0<s\notin \mN$ and $n\in\mN$,
$$
\|f\|_{\bB^s_{\infty,\infty}}\asymp\|f\|_{\bC^s},\quad\|f\|_{\bB^n_{\infty,\infty}}\lesssim\|f\|_{\bC^n}.
$$
The proof can be found in \cites{Tr92} or \cite{Ba-Ch-Da}.
\end{remark}

We also need the following interpolation inequality \cite{Be-Lo}*{Theorem 6.4.5-(3)}.

\begin{lemma}\label{INTER}
	Let $ \mA $ be a Banach space and $ \beta_1<\beta_2 $ be two positive noninteger numbers. If $\cT$ is a bounded linear operator from $\bC^{\beta_2}$ to $\mA$, then there is a constant $ c=c(\beta_1,\beta_2)>0 $ such that
	\begin{align*}
	\|\cT\|_{\cB(\bC^\beta,\mA)}
	\leq c \|\cT\|_{\cB(\bC^{\beta_1},\mA)}^\theta\|\cT\|_{\cB(\bC^{\beta_2},\mA)}^{1-\theta},
	\end{align*}
	where	$\beta=\theta\beta_1+(1-\theta)\beta_2 \notin \mN_0$ for some $\theta\in[0,1]$.
\end{lemma}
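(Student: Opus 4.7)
The plan is to prove this by direct Littlewood-Paley frequency splitting, which gives a self-contained route to real interpolation between the Hölder scale. The key fact, already recorded in \autoref{DF}, is that for $s\in(0,\infty)\setminus\mN$ one has $\|f\|_{\bC^s}\asymp\|f\|_{\bB^s_{\infty,\infty}}=\sup_{j\geq 0}2^{js}\|\Delta_j f\|_\infty$. Since $\beta_1,\beta_2,\beta\notin\mN_0$ by assumption, I may work throughout with the Besov-type norm. Write $M_i:=\|\cT\|_{\cB(\bC^{\beta_i},\mA)}$ and assume both are finite (else the claimed inequality is vacuous); note that $\bC^{\beta_2}\hookrightarrow\bC^{\beta_1}$ with norm constant depending only on $\beta_1,\beta_2$, so $M_2\lesssim M_1$.

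Given $f\in\bC^\beta$, split it in frequency: for $N\in\mN_0$ set $f_{\leq N}:=\sum_{j=0}^N\Delta_j f$ and $f_{>N}:=\sum_{j>N}\Delta_j f$, so that $f=f_{\leq N}+f_{>N}$. Using the near-orthogonality $\Delta_j\Delta_k=0$ for $|j-k|\geq 2$ (a consequence of the support property of $\phi_j$) and the Besov/Hölder equivalence, I would estimate
\begin{align*}
\|f_{\leq N}\|_{\bC^{\beta_2}}&\lesssim \sup_{j\leq N+1}2^{j\beta_2}\|\Delta_j f\|_\infty \lesssim 2^{(N+1)(\beta_2-\beta)}\|f\|_{\bC^\beta},\\
\|f_{>N}\|_{\bC^{\beta_1}}&\lesssim \sup_{j>N}2^{j\beta_1}\|\Delta_j f\|_\infty \lesssim 2^{(N+1)(\beta_1-\beta)}\|f\|_{\bC^\beta},
\end{align*}
where the gains $2^{(N+1)(\beta_2-\beta)}$ and $2^{(N+1)(\beta_1-\beta)}$ come from writing $2^{j\beta_i}\|\Delta_j f\|_\infty=2^{j(\beta_i-\beta)}\cdot 2^{j\beta}\|\Delta_j f\|_\infty$ and taking the obvious supremum in the two regimes.

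Applying $\cT$ and using linearity,
\[
\|\cT f\|_\mA\leq M_2\|f_{\leq N}\|_{\bC^{\beta_2}}+M_1\|f_{>N}\|_{\bC^{\beta_1}}\lesssim \bigl(M_2\,2^{N(\beta_2-\beta)}+M_1\,2^{-N(\beta-\beta_1)}\bigr)\|f\|_{\bC^\beta}.
\]
With $\beta=\theta\beta_1+(1-\theta)\beta_2$ we have $\beta_2-\beta=\theta(\beta_2-\beta_1)$ and $\beta-\beta_1=(1-\theta)(\beta_2-\beta_1)$, both positive when $\theta\in(0,1)$. I would then choose the integer $N\geq 0$ closest to $\frac{1}{\beta_2-\beta_1}\log_2(M_1/M_2)$; this balances the two terms (the choice is nonnegative precisely because $M_2\lesssim M_1$) and produces the sharp combined bound $\|\cT f\|_\mA\lesssim M_1^{\theta}M_2^{1-\theta}\|f\|_{\bC^\beta}$. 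Taking the supremum over $\|f\|_{\bC^\beta}\leq 1$ yields the stated operator-norm inequality, and the boundary cases $\theta\in\{0,1\}$ are immediate.

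The only nontrivial point in this plan is the optimization step: one has to justify that the optimal real-valued $N$ is nonnegative so that the integer rounding loses only a multiplicative constant depending on $\beta_2-\beta_1$. This is where the embedding $\bC^{\beta_2}\hookrightarrow\bC^{\beta_1}$ (hence $M_2\lesssim M_1$) is used. Everything else is bookkeeping with the Littlewood-Paley projectors $\Delta_j$ already introduced, together with the Hölder-Besov equivalence for non-integer exponents from \autoref{DF}.
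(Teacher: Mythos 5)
Your proof is correct, and it is in fact more than the paper itself supplies: the paper does not prove \autoref{INTER} at all but simply invokes \cite{Be-Lo}*{Theorem 6.4.5-(3)} as a black box. What you have written is a self-contained real-interpolation argument via the Littlewood--Paley projections that the paper has already set up (the block operators $\Delta_j$, the almost-orthogonality $\Delta_j\Delta_k=0$ for $|j-k|\ge 2$, and the equivalence $\|\cdot\|_{\bC^s}\asymp\|\cdot\|_{\bB^s_{\infty,\infty}}$ for non-integer $s>0$ from \autoref{DF}). The frequency split $f=f_{\le N}+f_{>N}$ and the two bounds
\begin{align*}
\|f_{\le N}\|_{\bC^{\beta_2}}\lesssim 2^{N(\beta_2-\beta)}\|f\|_{\bC^\beta},
\qquad
\|f_{>N}\|_{\bC^{\beta_1}}\lesssim 2^{-N(\beta-\beta_1)}\|f\|_{\bC^\beta}
\end{align*}
are exactly the $K$-functional estimates in the real-interpolation proof, and the subsequent optimization in $N$ is standard. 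The one place you are slightly imprecise is the claim that the optimal $N$ is automatically nonnegative "precisely because $M_2\lesssim M_1$": the embedding $\bC^{\beta_2}\hookrightarrow\bC^{\beta_1}$ only gives $M_2\le C M_1$ with $C=C(\beta_1,\beta_2)$ possibly larger than $1$, so the balancing point $\tfrac{1}{\beta_2-\beta_1}\log_2(M_1/M_2)$ may well be negative. However it is bounded below by $-\tfrac{\log_2 C}{\beta_2-\beta_1}$, so choosing $N=\max\bigl(0,\lceil\text{balancing point}\rceil\bigr)$ loses only a factor depending on $\beta_1,\beta_2$; when the balancing point is negative, $M_1\asymp M_2$ up to that same constant and $N=0$ already gives $M_1+M_2\lesssim M_1^\theta M_2^{1-\theta}$. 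With that small correction the argument is airtight. The trade-off between the two routes is the usual one: citing Bergh--L\"ofstr\"om is shorter and emphasizes that the statement is classical; your argument keeps the proof entirely inside the Littlewood--Paley toolkit that the rest of the paper depends on and makes the constant's dependence on $\beta_1,\beta_2$ transparent.
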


\subsection{L\'evy measures and heat kernel estimates}\label{LE}
We call a measure $\nu$ on $\mR^d$ a L\'evy measure if 
$$
\nu(\{0\})=0,\ \ \int_{\mR^d}\big(1\wedge|x|^2\big)\nu(\dif x)<+\infty.
$$
In particular, for $\alpha\in(0,2)$, we say a L\'evy measure $\nu^{(\alpha)}$ is $\alpha$-stable if it has form
\begin{align}\label{aStable}
\nu^{(\alpha)}(A)=\int^\infty_0\left(\int_{\mS^{d-1}}\frac{1_A (r\theta)\Sigma(\dif\theta)}{r^{1+\alpha}}\right)\dif r,\quad A\in\sB(\mR^d),
\end{align} 
where $\Sigma$ is a finite measure over the unit sphere $\mS^{d-1}$ (called spherical measure of $\nu^{(\alpha)}$). Notice that, for any $\gamma_1>\alpha>\gamma_2\geq 0$, 
\begin{align}\label{z}
\int_{\mR^d} {(|z|^{\gamma_1}\wedge |z|^{\gamma_2})}\nu^{(\alpha)}(\dif z)<\infty.
\end{align}
 One says that  an $\alpha$-stable measure  $\nu^{(\alpha)}$ is  non-degenerate if
\begin{align}\label{nonDe}
\int_{\mS^{d-1}}|\theta_0\cdot\theta|^\alpha\Sigma(\dif\theta)>0 \quad \hbox{for every } \theta_0\in\mS^{d-1}.   
\end{align}		

\begin{example}[Standard $\alpha$-stable measures]
	If $\Sigma$ is  the uniform measure on unit sphere $\mS^{d-1}$, then $ \nu^{(\alpha)} $ is the standard or strict $\alpha$-stable L\'{e}vy measure and
	$$
	\nu^{(\alpha)} (\dif y)=\frac{\dif y}{|y|^{d+\alpha}}.
	$$
	In this case, 
	\begin{align*}
       \widehat{\sL_{1,\mI}^{(\alpha)}f}(\xi)=-|\xi|^\alpha\hat{f}(\xi),\ \ \forall \xi\in \mR^d,
       \end{align*}
	where $\sL_{\kappa,\sigma}^{(\alpha)}$ is defined by \eqref{LL} and $\mI$ is the identity matrix.
\end{example}

\begin{example}[Cylindrical $\alpha$-stable measures]
	If $\Sigma= \sum_{k=1}^d \delta_{e_k}$, where
	$\delta_{e_k}$ is the Dirac measure at the $e_k = (0,\cdots,0,1_{k_{th}},0,\cdots,0)$, then 
$$
\nu^{(\alpha)}(\dif {x})=\sum_{k=1}^n \delta_{0}(\dif x_1)\cdots\delta_{0}(\dif x_{k-1})  \frac{\dif x_k}{|x_k|^{1+\alpha}}\delta_{0}(\dif x_{k+1})\cdots\delta_{0}(\dif x_d),
$$
where $\delta_{0}$ is the Dirac measure at the zero. Such measure is called the cylindrical L\'evy measure. Moreover, 
\begin{align*}
 \widehat{\sL_{1,\mI}^{(\alpha)}f}(\xi)=-\sum_{i=1}^d|\xi_i|^\alpha\hat{f}(\xi),\ \ \forall \xi =(\xi_1,..,\xi_d) \in \mR^d.
\end{align*}
Notice that $|\xi|^\alpha$ is not smooth at origin, and $\sum_{i=1}^d|\xi_i|^\alpha$ is not smooth on all axes $\cup_{i=1}^d\{\xi_i=0\}$.
In other words, $\sum_{i=1}^d|\xi_i|^\alpha$ is more singular than $|\xi|^\alpha$.
\end{example}  

Fix $\alpha\in(0,2)$.  Let $\nu^{(\alpha)}$ be a non-degenerate $\alpha$-stable measure, $ \kappa:\mR_+\times\mR^d\to\mR_+ $ and $ \sigma:\mR_+\to\mR^d\otimes\mR^d$ be measurable functions satisfying the following assumptions:
\begin{align}\label{con0}
	c_0^{-1}\leq \kappa(t,z)\leq c_0,\, c_0^{-1}|\xi|\le|\sigma(t)\xi|\le c_0|\xi|,\quad \forall (t,z,\xi)\in\mR_+\times\mR^d\times\mR^d,
\end{align} 
for some constant $c_0\ge1$ and in the case of $ \alpha=1 $,
\begin{align}\label{con1}
	\int_{r\leq |z| \leq R}z \kappa(t,z) \nu^{(\alpha)}(\dif z)=0 \ \ \hbox{for every } 0<r<R<\infty.
\end{align} 
Let $ N(\dif t,\dif z) $ be the Possion random measure with intensity measure $ \kappa(t,z)\nu^{(\alpha)}(\dif z)\dif t $. For $ 0\leq s\leq t $, define
\begin{align}\label{L}
L_{s,t}^\kappa:=\int_{s}^t \int_{\mR^d}z\tilde N(\dif r,\dif z)+\int_{s}^t \int_{\mR^d} (z-z^{(\alpha)})\kappa(r,z)\nu^{(\alpha)}(\dif z)\dif r,
\end{align}
where $ \tilde{N}(\dif r,\dif z):= N(\dif r,\dif z)-\kappa(r,z)\nu^{(\alpha)}(\dif z)\dif r $ is the compensated Poisson random measure and $$
z^{(\alpha)}:=z\1_{\alpha\in(1,2)}+z\1_{|z|\leq 1}\1_{\alpha=1}.
$$
Precisely,
\begin{align*}
	L_{s,t}^\kappa=
	\begin{cases}
	\vspace{0.6em}
	\int_{s}^t \int_{\mR^d}z N(\dif r,\dif z),&\ \ \hbox{if } \alpha\in(0,1);\\
	\vspace{0.6em}
	\int_{s}^t \int_{|z|\leq 1}z\tilde{N}(\dif r,\dif z) +\int_{0}^t \int_{|z|> 1} zN(\dif r,\dif z),&\ \ \hbox{if } \alpha=1;\\
	\int_{s}^t \int_{\mR^d}z\tilde N(\dif r,\dif z),&\ \ \hbox{if } \alpha\in(1,2).
	\end{cases}
\end{align*}
Next, we consider the following process: 
\begin{align}\label{X-def}
	X^{\kappa,\sigma}_{s,t}:=\int_s^t \sigma(r) \dif  L_{s,r}^\kappa,\ \ 0<s<t<\infty.
\end{align}
By the same argument as in \cite{Ch-Ha-Zh}, we have the crucial lemma in this paper

\begin{lemma}\label{CRU}
    Let $\alpha\in (0,2)$. Under \eqref{con0} and \eqref{con1}, the random variable $X_{s,t}^{\kappa,\sigma}$ defined by \eqref{X-def} has a smooth density $p_{s,t}^{\kappa,\sigma}$. Furthermore, for any $T>0$, $\beta\in[0,\alpha)$, and $ n\in\mN_0$, there is a constant $c=c(c_0,\alpha,\nu^{(\alpha)},\beta,T,d)$ such that for any $s,t\in[0,T]$ and $ j\in \mN $,
        \begin{align}\label{CruF}
    \int_0^t\int_{\mR^ d}|x|^\beta|\Delta_j p_{s,t}^{\kappa,\sigma}(x)|\dif x\dif s \leq c 2^{-(\alpha+\beta)j}.
    \end{align}
\end{lemma}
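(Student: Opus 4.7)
The plan is to compute $\hat{p}_{s,t}^{\kappa,\sigma}$ explicitly via the L\'evy--It\^o decomposition, extract decay on the Fourier side from the non-degeneracy of $\nu^{(\alpha)}$, and convert the resulting symbol bounds into the weighted $L^1$ estimate through Littlewood-Paley localization and integration by parts. Since the integrand in \eqref{X-def} is deterministic, $X_{s,t}^{\kappa,\sigma}$ is an inhomogeneous additive process and
\begin{align*}
\hat{p}_{s,t}^{\kappa,\sigma}(\xi)=\exp\left(\int_s^t\psi^{(r)}(\sigma(r)^\top\xi)\dif r\right),\quad \psi^{(r)}(\eta):=\int_{\mR^d}\!\bigl(e^{iz\cdot\eta}-1-iz^{(\alpha)}\!\cdot\eta\bigr)\kappa(r,z)\nu^{(\alpha)}(\dif z),
\end{align*}
which is well defined thanks to \eqref{z} and, in the critical case $\alpha=1$, the cancellation \eqref{con1}. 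I would first establish the symbol bounds
\begin{align*}
\mathrm{Re}\,\psi^{(r)}(\sigma(r)^\top\xi)\leq -c_1|\xi|^\alpha,\qquad \bigl|\partial_\xi^\gamma\psi^{(r)}(\sigma(r)^\top\xi)\bigr|\leq c_\gamma|\xi|^{\alpha-|\gamma|}\quad (|\xi|\geq 1),
\end{align*}
using the lower bound on $\kappa$, the ellipticity of $\sigma$, the stable scaling identity $\int_0^\infty r^{-1-\alpha}(1-\cos(rs))\dif r=c_\alpha|s|^\alpha$, and the non-degeneracy \eqref{nonDe} to get the lower bound; the constants then depend only on $c_0,\alpha,\nu^{(\alpha)},d$.

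Next, I would apply Fa\`a di Bruno's formula to the composition $\xi\mapsto\exp(\cdot)$ and absorb polynomial factors $\bigl((t-s)|\xi|^\alpha\bigr)^k$ into a fraction of the exponential by $u^k e^{-u}\leq C_k e^{-u/2}$, producing
\begin{align*}
\bigl|\partial_\xi^\gamma \hat{p}_{s,t}^{\kappa,\sigma}(\xi)\bigr|\leq C_\gamma|\xi|^{-|\gamma|}\exp\!\bigl(-c_2(t-s)|\xi|^\alpha\bigr),\qquad |\xi|\geq 1.
\end{align*}
In particular $\hat{p}_{s,t}^{\kappa,\sigma}$ is Schwartz, which already yields the smoothness of $p_{s,t}^{\kappa,\sigma}$. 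Writing $\Delta_j p_{s,t}^{\kappa,\sigma}(x)=(2\pi)^{-d}\!\int e^{-ix\cdot\xi}\phi_j(\xi)\hat{p}_{s,t}^{\kappa,\sigma}(\xi)\dif\xi$ for $j\in\mN$ and integrating by parts $N$ times, while using $|\partial^\gamma\phi_j|\leq C_\gamma 2^{-|\gamma|j}$ on the frequency shell $|\xi|\sim 2^j$, gives the pointwise kernel bound
\begin{align*}
\bigl|\Delta_j p_{s,t}^{\kappa,\sigma}(x)\bigr|\leq C_N\,2^{dj}(1+2^j|x|)^{-N}\exp\!\bigl(-c_2(t-s)2^{\alpha j}\bigr)
\end{align*}
for any $N>d+\beta$. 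Multiplying by $|x|^\beta$, rescaling $y=2^j x$, and integrating yields $\int_{\mR^d}|x|^\beta|\Delta_j p_{s,t}^{\kappa,\sigma}(x)|\dif x\leq c\,2^{-\beta j}\exp(-c_2(t-s)2^{\alpha j})$, and a further integration in $s\in[0,t]$ with the elementary bound $\int_0^t e^{-c_2 u\,2^{\alpha j}}\dif u\leq c_3 2^{-\alpha j}$ delivers the claimed $c\,2^{-(\alpha+\beta)j}$.

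The main obstacle is the symbol analysis just described: one must obtain the derivative estimates on $\hat{p}_{s,t}^{\kappa,\sigma}$ \emph{uniformly in the rough, time-dependent coefficients} $\kappa(r,z)$ and $\sigma(r)$, under the very singular spherical measure $\Sigma$ permitted in \eqref{aStable}. The critical case $\alpha=1$ is the most delicate: the cancellation \eqref{con1} is required not merely for $\psi^{(r)}$ to make sense but also to secure the same $C^\infty$ smoothness of the symbol as in the cases $\alpha\neq 1$. Once these uniform symbol bounds are in hand, all remaining steps are stationary-phase/integration-by-parts computations.
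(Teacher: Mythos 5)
Your strategy is to obtain, on the Fourier side, derivative bounds on the symbol $\psi^{(r)}$ and then convert them into physical-space decay of $\Delta_j p_{s,t}^{\kappa,\sigma}$ by stationary phase.  The step that fails is precisely the intermediate symbol estimate
$\bigl|\partial_\xi^\gamma\psi^{(r)}(\sigma(r)^\top\xi)\bigr|\leq c_\gamma|\xi|^{\alpha-|\gamma|}$, and there are two independent reasons it is false in the generality of the lemma.  First, the L\'evy measure $\nu^{(\alpha)}$ is allowed to be as singular as the cylindrical one, for which $\psi(\eta)=-c\sum_i|\eta_i|^\alpha$; this symbol is not even once differentiable on the coordinate hyperplanes when $\alpha\le 1$, so there is no bound of the form $c_\gamma|\xi|^{\alpha-|\gamma|}$ on a full frequency annulus $|\xi|\sim 2^j$.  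Second, $\kappa(r,z)$ is only assumed bounded and measurable in $z$; differentiating $\psi^{(r)}$ brings down factors of $z$, and $\int_{|z|>1}|z|^{|\gamma|}\,\nu^{(\alpha)}(\dif z)=\infty$ for $|\gamma|\ge\alpha$, so even when $\nu^{(\alpha)}$ is the standard stable measure the higher $\xi$-derivatives of $\psi^{(r)}$ do not exist as absolutely convergent integrals and the cancellation that rescues $\kappa\equiv 1$ is destroyed by rough $\kappa$.  Consequently $\hat p_{s,t}^{\kappa,\sigma}$ is not Schwartz (it is merely continuous in $\xi$ with rapid decay), and the repeated integration by parts that produces the pointwise bound $|\Delta_j p|\lesssim 2^{dj}(1+2^j|x|)^{-N}e^{-c(t-s)2^{\alpha j}}$ is unavailable.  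You correctly observe at the end that ``the symbol analysis'' is the main obstacle, but the obstacle is not merely technical: under the hypotheses of the lemma the estimates you need do not hold.

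The paper deliberately avoids Fourier-side symbol regularity.  Referring to the argument of \cite{Ch-Ha-Zh}, the proof works in physical space: one decomposes $X_{s,t}^{\kappa,\sigma}$ into small-jump and large-jump parts, extracts from the small-jump part (which has finite moments of all orders) the $L^1$-smoothness of the density $\|\nabla^n p_{s,t}\|_1\lesssim (t-s)^{-n/\alpha}$ via probabilistic (Malliavin/It\^o) techniques rather than symbol calculus, uses the large-jump part and the restriction $\beta<\alpha$ to secure the weighted bound $\int|x|^\beta p_{s,t}(x)\,\dif x<\infty$, and then combines the two through the Littlewood--Paley block $\Delta_j$, whose vanishing moments play the role your integration by parts would have played.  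The only Fourier input is the rough heat-kernel decay $|\hat p_{s,t}^{\kappa,\sigma}(\xi)|\le e^{-c(t-s)|\xi|^\alpha}$ coming from $\mathrm{Re}\,\psi^{(r)}\le -c|\xi|^\alpha$ (which you do prove correctly and which yields the smoothness of $p_{s,t}^{\kappa,\sigma}$).  If you want to salvage your proposal you would need to restrict to the standard radially symmetric $\nu^{(\alpha)}$ and $z$-independent $\kappa$; in the stated generality a physical-space or Malliavin-type argument along the lines of \cite{Ch-Ha-Zh} is required.
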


\section{Schauder's estimates for nonlocal equations}\label{SSS}

In this section, we show Schauder's estimates for nonlocal  equations: 
\begin{align}\label{FE}
    \partial_tu=\sL^{(\alpha)}_{\kappa,\sigma} u+b\cdot\nabla u+f,\ \ u(0)=0,
\end{align}
where $ b :\mR^+\times \mR^d \to \mR^d $ is a measurable function and $\sL^{(\alpha)}_{\kappa,\sigma}$ is defined by \eqref{LL} with $ \alpha\in(0,2) $:
$$
\sL^{(\alpha)}_{\kappa,\sigma}u(t,x):=\int_{\mR^d}\(u(t,x+\sigma(t,x)z)-u(t,x)-\sigma(t,x)z^{(\alpha)}\cdot\nabla u(t,x)\)\kappa(t,x,z)\nu^{(\alpha)}(\dif z). 
$$
Throughout this section, we assume that $ \kappa,\sigma $ and $ b $ satisfy, respectively,  conditions {\bf (H$^{\beta}_{\kappa}$)}, {\bf (H$^{\gamma}_{\sigma}$)} and {\bf (H$^{\beta}_{b}$)} .

\begin{definition}[Classical solutions]\label{DEF}
	We call a bounded continuous function $u$ 
	defined on $\mR_+\times\mR^{d}$ a classical solution of PDE \eqref{FE} if for some $\eps\in(0,1)$,
	$$
	u\in \(\cap_{M>0}C(\mR_+;\bC^{(\alpha\vee 1)+\eps}(B_M))\)\cap\mL^\infty_{loc}(\bC^{(\alpha\vee 1)+\eps})
	$$
 and for all $ (t,x)\in[0,\infty)\times \mR^d $,
    \begin{align}\label{cla}
    u(t,x)=\int^t_0\Big(\sL^{(\alpha)}_{\kappa,\sigma} u+b\cdot\nabla u+f\Big)(s,x)\dif s.
	\end{align}
\end{definition}
\br
Note that under the conditions {\bf (H$^{0}_{\kappa}$)} and {\bf (H$^{0}_{\sigma}$)}, $\sL^{(\alpha)}_{\kappa,\sigma}u(t,x)$ and $b\cdot\nabla u(t,x)$ is pointwisely well defined for any $u\in\mL_{loc}^\infty(\bC^\gamma)$ with $\gamma>\alpha\vee1$. Hence, the classical solution is well-defined.
\er

We have the following maximum principle for classical solutions. 

\begin{lemma}[Maximum principle]\label{MM}
	Assume that $\sigma(t,x)$ and $\kappa(t,x,z)\geq 0$ are bounded measurable functions.  Let  $b(t,x)$ be a measurable function and bounded in $ \mR_+ $ for fixed $x\in\mR^d$. Then, for any  $ T>0 $ and classical solution $u$ of PDE \eqref{FE} in the sense of \autoref{DEF}, it holds that 
	\begin{align*}
	\|u\|_{\mL^\infty_T}\leq T\|f\|_{\mL^\infty_T}.
	\end{align*}		
\end{lemma}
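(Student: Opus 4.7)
The plan is to carry out the classical perturbed maximum principle for non-local equations, using a slowly growing test function to force the supremum of a perturbation of $u$ to be attained and to create a strict inequality there. Setting $M:=\|f\|_{\mL^\infty_T}$, it suffices to prove $u(t,x)\leq tM$ on $[0,T]\times\mR^d$; the matching lower bound $-u(t,x)\leq tM$ follows by applying the same argument to $-u$, which classically solves \eqref{FE} with $-f$ in place of $f$.

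The key ingredient is the test function $\psi(x):=\log(e+|x|^2)$, which is smooth, radial, satisfies $\psi\geq 1$, $\psi(x)\to\infty$ as $|x|\to\infty$, has $\|\nabla\psi\|_\infty,\|\nabla^2\psi\|_\infty<\infty$, and obeys $|\nabla\psi(x)|\leq 2/\sqrt{e+|x|^2}$. I claim that under the standing assumptions of this section there is a constant $C_0=C_0(c_0,d,\alpha,\nu^{(\alpha)})$ such that
\[
\sL^{(\alpha)}_{\kappa,\sigma}\psi(t,x)+b(t,x)\cdot\nabla\psi(x)\leq C_0\quad\text{for all }(t,x)\in[0,T]\times\mR^d.
\]
The drift contribution is controlled by combining the gradient decay with the linear growth $|b(t,x)|\leq 2c_0(1+|x|)$ coming from {\bf (H$^{\beta}_{b}$)} and \autoref{HB}. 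For the non-local part one splits the integrand at $|z|=1$. On $\{|z|\leq 1\}$, Taylor expansion (using $\nabla^2\psi\in L^\infty$ when $\alpha\geq 1$ or Lipschitz continuity when $\alpha<1$), together with the convention for $z^{(\alpha)}$, produces an integrand bounded by $C(|z|^2\wedge|z|)$, which is $\nu^{(\alpha)}$-integrable. On $\{|z|>1\}$ the elementary inequality
\[
\psi(x+\sigma z)-\psi(x)=\log\tfrac{e+|x+\sigma z|^2}{e+|x|^2}\leq\log\bigl(3+2|\sigma z|^2/e\bigr)
\]
is $x$-independent and, together with the correction $|\sigma z^{(\alpha)}\cdot\nabla\psi|\lesssim|z|$ (relevant only when $\alpha>1$), is $\nu^{(\alpha)}$-integrable since $\log(1+|z|^2)\lesssim|z|^\gamma$ for any $\gamma\in(0,\alpha)$; cf.\ \eqref{z}.

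Having chosen $\eta\in(0,1/C_0)$, for each $\delta>0$ set $w_\delta(t,x):=u(t,x)-tM-\delta t-\delta\eta\psi(x)$. Because $u$ is bounded on $[0,T]\times\mR^d$ while $\eta\psi(x)\to\infty$ at infinity, the continuous function $w_\delta$ attains its supremum at some $(t^*,x^*)\in[0,T]\times\mR^d$; since $w_\delta(0,\cdot)\leq-\delta\eta<0$, if this supremum is positive then $t^*>0$. At such a maximum $\partial_t w_\delta(t^*,x^*)\geq 0$ and $\nabla w_\delta(t^*,x^*)=0$, and since $w_\delta(t^*,\cdot)-w_\delta(t^*,x^*)\leq 0$ the non-local term obeys $\sL^{(\alpha)}_{\kappa,\sigma}w_\delta(t^*,x^*)\leq 0$ (the $z^{(\alpha)}$-correction vanishes as $\nabla w_\delta=0$). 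Substituting $u=w_\delta+tM+\delta t+\delta\eta\psi$ into \eqref{FE} and using $f-M\leq 0$ yields pointwise
\[
\partial_t w_\delta\leq \sL^{(\alpha)}_{\kappa,\sigma}w_\delta+b\cdot\nabla w_\delta+\delta\bigl(\eta(\sL^{(\alpha)}_{\kappa,\sigma}\psi+b\cdot\nabla\psi)-1\bigr),
\]
so at $(t^*,x^*)$ the right-hand side is $\leq\delta(\eta C_0-1)<0$, contradicting $\partial_t w_\delta\geq 0$. Hence $w_\delta\leq 0$, and sending $\delta\to 0$ gives $u(t,x)\leq tM$. The principal obstacle is precisely this uniform-in-$x$ control of $\sL^{(\alpha)}_{\kappa,\sigma}\psi$: the unbounded, linearly growing drift rules out polynomially growing test functions, and the possibly very singular measure $\nu^{(\alpha)}$ prevents using exponentially growing ones; the slowly varying logarithm balances both constraints.
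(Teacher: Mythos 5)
Your argument and the paper's share the maximum-principle philosophy but are executed quite differently, so this is genuinely a different route. The paper's proof is essentially a two-line reduction: it sets $\bar u=-u+\int_0^t\|f(s,\cdot)\|_\infty\,\dif s$ and $\underline u=-u-\int_0^t\|f(s,\cdot)\|_\infty\,\dif s$, observes that $\bar u$ (resp.\ $\underline u$) is a supersolution (resp.\ subsolution) with zero initial data, and then invokes an external comparison theorem, \cite{CHXZ}*{Theorem 6.1}, together with \autoref{HB} to control the linear growth of $b$. You instead give a self-contained proof of exactly that comparison statement. Your choice of Lyapunov function $\psi(x)=\log(e+|x|^2)$ is the key technical point and is well calibrated: it grows to infinity (so the penalized function $w_\delta$ attains its supremum) yet slowly enough that $\sL^{(\alpha)}_{\kappa,\sigma}\psi$ stays bounded despite the large jumps of the singular, heavy-tailed $\alpha$-stable measure --- you correctly use $\log(3+2|\sigma z|^2/e)\lesssim|z|^{\gamma_2}$ for $\gamma_2<\alpha$ and \eqref{z} to make the large-jump integral finite; and its gradient decays like $1/(1+|x|)$, which exactly offsets the linear growth $|b(t,x)|\lesssim 1+|x|$ inherited from {\bf (H$^{\beta}_b$)} via \autoref{HB}. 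The splitting at $|z|=1$, the case analysis in $\alpha$ for the small-jump Taylor bound, the control $\sL^{(\alpha)}_{\kappa,\sigma}w_\delta(t^*,x^*)\leq0$ at an interior maximum using $\kappa\geq0$ and $\nabla w_\delta(t^*,x^*)=0$, and the strict negativity forced by $\eta C_0<1$ are all correct. What your route buys is transparency and independence from the cited reference (which the authors themselves must remark applies to general $\nu^{(\alpha)}$, not just the absolutely continuous case); what you pay is a longer argument.

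One technical point worth tightening: the definition of classical solution, \autoref{DEF}, gives only the integral identity \eqref{cla}, hence $\partial_t u=\sL^{(\alpha)}_{\kappa,\sigma}u+b\cdot\nabla u+f$ merely for Lebesgue-a.e.\ $t$, and $f,\kappa,\sigma,b$ are only measurable in $t$. So the equation need not hold at the specific time $t^*$, and ``$\partial_tw_\delta(t^*,x^*)\geq0$, contradicting the equation at $(t^*,x^*)$'' is not literally available. The standard fix is to argue on the integral form: from $w_\delta(t^*,x^*)\geq w_\delta(s,x^*)$ one gets $\int_s^{t^*}g_\delta(r,x^*)\,\dif r\geq0$ for all $s<t^*$, where $g_\delta$ is the right-hand side of your differential inequality, and one then needs $g_\delta(\cdot,x^*)<0$ on a left neighborhood of $t^*$ (which follows once one also observes that $\sL^{(\alpha)}_{\kappa,\sigma}w_\delta(r,x^*)+b(r,x^*)\cdot\nabla w_\delta(r,x^*)$ is small for $r$ near $t^*$, using the $t$-continuity of $u$ in $\bC^{1+\eps}(B_M)$ together with a further strict penalization), or one invokes a Lebesgue-point/Dini-derivative refinement. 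This is the same technicality the paper absorbs into the cited Theorem 6.1 of \cite{CHXZ}; your proof is not wrong in spirit, but as written it treats the pointwise equation as holding at $t^*$, which requires a sentence of justification.
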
 
\begin{proof}
    Define
	$$
	\bar u(t,x):=-u(t,x)+\int_0^t\|f(s,\cdot)\|_{\infty}\dif s\quad\hbox{and}\quad \uline{u}(t,x):=-u(t,x)-\int_0^t\|f(s,\cdot)\|_{\infty}\dif s.
	$$
	By \eqref{cla}, it is easy to see that for Lebesgue almost all $ t>0 $,
	\begin{align*}
	\partial_t\bar u-\sL_{\kappa,\sigma}^{(\alpha)}\bar u-b\cdot\nabla\bar u\ge0\quad \hbox{and}\quad\partial_t\uline u-\sL_{\kappa,\sigma}^{(\alpha)}\uline u-b\cdot\nabla\uline u\leq0,
	\end{align*}
	where $\lim_{t\to0}\bar u(t,x)=\lim_{t\to0}\uline u(t,x)=0$. Notice that the form of $\nu^{(\alpha)}$ does not affect the result of \cite{CHXZ}*{Theorem 6.1}. Thus, by \cite{CHXZ}*{Theorem 6.1} and \autoref{HB}, we have
	\begin{align*}
	\uline u(t,x)\leq0\leq\bar u(t,x)
	\end{align*}
	which implies that
	\begin{align*}
	|u(t,x)|\le\int_0^t\|f(s,\cdot)\|_{\infty}\dif s\leq T\|f\|_{\mL_T^\infty}.
	\end{align*}	
The desired estimate is proved.	
\end{proof}

Our  goal of this section is to prove the following Schauder's apriori estimates. 

\begin{theorem}[Schauder's estimates]\label{SFA}
    Let $ \alpha\in(1/2,2) $, $\gamma\in(\frac{1-\alpha}{\alpha}\vee0,1]$ and $\beta\in((1-\alpha)\vee0,(\alpha\wedge1)\gamma)$ with $\alpha+\beta\notin\mN$. Under the conditions {\bf (H$^{\beta}_{\kappa}$)}, {\bf (H$^{\gamma}_{\sigma}$)}, and {\bf (H$^{\beta}_{b}$)}, for any $T>0$ and $f \in \mL_T^\infty(\bC^{\beta})$, there is a constant $ c=c(T,c_0,d,\alpha,\beta,\gamma)>0 $ such that for any classical solution $u$ of PDE \eqref{FE},
    \begin{align}\label{eq:U01}
    \|u\|_{\mL_T^\infty(\bC^{\alpha+\beta})}\leq c \|f\|_{\mL_T^\infty(\bC^{\beta})}.
    \end{align}
\end{theorem}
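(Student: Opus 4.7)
By the Besov--H\"older identification in \autoref{DF}, it suffices to prove
\[
\sup_{j\ge 0}\sup_{t\in[0,T]}2^{j(\alpha+\beta)}\|\Delta_j u(t,\cdot)\|_\infty \le c\|f\|_{\mL^\infty_T(\bC^\beta)}.
\]
Because $b$ is only locally H\"older and in general unbounded, a global freezing of the whole operator is impossible; my plan is to remove $b$ along characteristics, in the spirit of \cite{Ha-Wu-Zh,Cha-Me-Pr}. For each base point $x_0\in\mR^d$, let $\theta_t=\theta_t^{x_0}$ solve $\dot\theta_t=-b(t,\theta_t)$, $\theta_0=x_0$, which is well posed on $[0,T]$ by \textbf{(H$^\beta_b$)} and \autoref{HB}. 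Setting
\[
\bar u(t,x):=u(t,x+\theta_t),\quad \bar\kappa(t,x,z):=\kappa(t,x+\theta_t,z),\quad \bar\sigma(t,x):=\sigma(t,x+\theta_t),
\]
together with $\bar b(t,x):=b(t,x+\theta_t)-b(t,\theta_t)$ and $\bar f(t,x):=f(t,x+\theta_t)$, the chain rule turns \eqref{FE} into $\partial_t\bar u=\bar\sL\bar u+\bar b\cdot\nabla\bar u+\bar f$ with $\bar u(0,\cdot)=0$ and $\bar\sL:=\sL^{(\alpha)}_{\bar\kappa,\bar\sigma}$. By \textbf{(H$^\beta_b$)}, $\bar b(t,0)=0$ and $|\bar b(t,x)|\le c_0|x|^\beta$ on $|x|\le 1$, so the drift has become a local, lower-order perturbation vanishing at the origin.

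\textbf{Duhamel at a frozen base point.} I further freeze the principal part at $x=0$ by introducing the $x$-independent operator $\bar\sL^0_t:=\sL^{(\alpha)}_{\kappa^0_t,\sigma^0_t}$ with $\kappa^0_t(z):=\bar\kappa(t,0,z)$, $\sigma^0_t:=\bar\sigma(t,0)$. Its forward semigroup $P^0_{s,t}$ is convolution with the density $p^0_{s,t}$ of the process \eqref{X-def}, to which \autoref{CRU} applies:
\begin{equation}\label{eq:cru-applied}
\int_0^t\int_{\mR^d}|y|^\beta|\Delta_j p^0_{s,t}(y)|\,\dif y\,\dif s\le c\,2^{-(\alpha+\beta)j}.
\end{equation}
Writing the equation as $\partial_t\bar u=\bar\sL^0\bar u+[(\bar\sL-\bar\sL^0)\bar u+\bar b\cdot\nabla\bar u+\bar f]$, Duhamel's formula followed by $\Delta_j$ and evaluation at $x=0$ yields
\[
|\Delta_j\bar u(t,0)|\le\int_0^t\bigl|\bigl(P^0_{s,t}\Delta_j\bigl[(\bar\sL-\bar\sL^0)\bar u+\bar b\cdot\nabla\bar u+\bar f\bigr](s,\cdot)\bigr)(0)\bigr|\,\dif s.
\]
Because $\Delta_j \bar u^{x_0}(t,0)=(\Delta_j u)(t,\theta_t^{x_0})$ and $x_0\mapsto\theta_t^{x_0}$ is onto, the device of \cite{Ha-Wu-Zh} gives $\sup_{x_0}|\Delta_j\bar u^{x_0}(t,0)|=\|\Delta_j u(t,\cdot)\|_\infty$, so it is enough to work at a single base point.

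\textbf{Perturbation estimates.} The heart of the proof is to bound each integrand pointwise by $|x|^\beta$ times a seminorm of $\bar u$ of order strictly below $\alpha+\beta$, so that \eqref{eq:cru-applied} yields the uniform gain $2^{-(\alpha+\beta)j}$. For $(\bar\sL-\bar\sL^0)\bar u$, I split along the $\bar\kappa$- and $\bar\sigma$-differences. The $\bar\kappa$-piece uses $|\bar\kappa(s,x,z)-\bar\kappa(s,0,z)|\le c_0|x|^\beta$ together with the standard bound $\|\sL^{(\alpha)}_{1,\bar\sigma}v\|_\infty\lesssim\|v\|_{\bC^{\alpha+\varepsilon}}$ and contributes $|x|^\beta\|\bar u(s,\cdot)\|_{\bC^{\alpha+\varepsilon}}$. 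The $\bar\sigma$-piece is more delicate: a Taylor expansion inside the L\'evy integral controls the integrand by $\|\bar\sigma(s,x)-\bar\sigma(s,0)\|\le c_0|x|^\gamma$ and derivatives of $\bar u$; splitting $|x|^\gamma=|x|^\beta\cdot|x|^{\gamma-\beta}$ and exploiting $\beta<(\alpha\wedge1)\gamma$ collapses it again to $|x|^\beta\|\bar u(s,\cdot)\|_{\bC^{\alpha+\beta'}}$ for some $\beta'<\beta$. The drift piece is immediate: $|\bar b\cdot\nabla\bar u|\le c_0|x|^\beta\|\nabla\bar u\|_\infty$ on $|x|\le1$, the tail $|x|>1$ being handled by a cut-off of the type in \autoref{CutF} combined with \autoref{HB}. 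For the source, Bernstein's inequality \eqref{Bernstein} and $\|\bar f\|_{\bC^\beta}=\|f\|_{\bC^\beta}$ give $\|\Delta_j\bar f\|_\infty\lesssim 2^{-\beta j}\|f\|_{\mL^\infty_T(\bC^\beta)}$.

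\textbf{Closure and main obstacle.} Multiplying by $2^{j(\alpha+\beta)}$ and taking suprema produces
\[
\|u(t,\cdot)\|_{\bC^{\alpha+\beta}}\le c\|f\|_{\mL^\infty_T(\bC^\beta)}+c\int_0^t\|u(s,\cdot)\|_{\bC^{\alpha+\beta'}}\,\dif s+\tfrac12\sup_{s\le t}\|u(s,\cdot)\|_{\bC^{\alpha+\beta}},
\]
where the last term (arising from the localization at scale $|x|\le 1$) is absorbed on the left. Interpolating $\bC^{\alpha+\beta'}$ between $L^\infty$ and $\bC^{\alpha+\beta}$ via \autoref{INTER}, bounding $\|u\|_{\mL^\infty_T}$ by the maximum principle \autoref{MM}, and applying Gr\"onwall yield \eqref{eq:U01}. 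The principal obstacle is the $\bar\sigma$-difference step: since $\sigma$ is $\gamma$- but not $\beta$-H\"older while \autoref{CRU} only supplies the weight $|x|^\beta$, the missing factor $|x|^{\gamma-\beta}$ must be unearthed from the L\'evy integral itself. This is precisely where the moment condition $\alpha>1/2$ (needed to make certain $\alpha$-stable integrals with polynomial growth in $|z|$ converge, as noted in \autoref{SEA}'s remark) and the sharp range $\beta<(\alpha\wedge1)\gamma$, $\gamma>(1-\alpha)/\alpha$ become essential, mirroring the restrictions in \cite{Cha-Me-Pr}.
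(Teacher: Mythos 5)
Your overall skeleton --- freeze along the characteristic $\dot\theta_t=-b(t,\theta_t)$, write Duhamel's formula at a frozen base point, estimate each term via \autoref{CRU}, then close by interpolation and the maximum principle --- matches the paper's strategy. But there is a genuine gap in how you handle the unbounded drift, and this is exactly the step the paper must work hardest to justify.

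You dispose of the drift with ``$|\bar b\cdot\nabla\bar u|\le c_0|x|^\beta\|\nabla\bar u\|_\infty$ on $|x|\le1$, the tail $|x|>1$ being handled by a cut-off of the type in \autoref{CutF} combined with \autoref{HB}.'' This does not work as written. For $|x|>1$, \autoref{HB} only gives $|\bar b(s,x)|\lesssim|x|$, and if you push that directly through Duhamel you need $\int_0^t\int_{\mR^d}|x|\,|\Delta_j p_{s,t}(x)|\,\dif x\,\dif s\lesssim2^{-j(\alpha+1)}$, which \autoref{CRU} supplies only for weights $|x|^\theta$ with $\theta<\alpha$; for $\alpha\le1$ the exponent $1$ is out of range, and an $\alpha$-stable density with $\alpha\le1$ does not even have a finite first moment. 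The paper avoids this in two stages you have collapsed into one: (i) it first proves a self-contained Schauder estimate under a \emph{global} bound $b\in\mL^\infty_{loc}(\bC^\beta)$ (\autoref{CO}), where $|\tilde b(t,x)|\lesssim|x|^\beta$ holds \emph{for all} $x$ so \autoref{CRU} applies directly; and (ii) it then multiplies the PDE --- not the Duhamel integral --- by $\chi$, so $\chi\tilde u$ solves a \emph{new} equation \eqref{AM04} with coefficient $\chi\tilde b$, which is genuinely globally $\bC^\beta$ by \autoref{CutF}, at the cost of the commutator term $[\chi,\sL^{(\alpha)}_{\tilde\kappa,\tilde\sigma}]\tilde u$. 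Controlling that commutator is not automatic: it requires the separate estimate \autoref{COM}, $\|[\chi,\sL^{(\alpha)}_{\kappa,\sigma}]u\|_{\bC^\beta}\lesssim\|u\|_{\bC^\alpha}$, whose proof again splits small/large jumps and uses \autoref{INTER}, \autoref{FuHe} and the special structure of $z^{(\alpha)}$. This chain is the heart of the unbounded extension and is not visible in your outline.

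Two smaller remarks. First, your sketch of the $\bar\sigma$-difference (``splitting $|x|^\gamma=|x|^\beta\cdot|x|^{\gamma-\beta}$ ... collapses it to $|x|^\beta\|\bar u\|_{\bC^{\alpha+\beta'}}$'') does not describe a concrete mechanism; the paper instead chooses $\eps$ so that the H\"older modulus of $u$ of order $\alpha\pm\eps$ produces weights $|x|^{(\alpha\pm\eps)\gamma}$ lying in the admissible range of \autoref{CRU}, and then uses $\beta<(\alpha\wedge1)\gamma$ to recover the full gain $2^{-j(\alpha+\beta)}$. Second, your closure uses a time-integral plus Gr\"onwall; the paper has no such term and no Gr\"onwall: after taking $\sup_{x_0}$ via \autoref{HWZ} it directly obtains $\|u\|_{\mL^\infty_T(\bC^{\alpha+\beta})}\lesssim\|u\|_{\mL^\infty_T(\bC^{\alpha+\beta-\eps})}+\|f\|_{\mL^\infty_T(\bC^\beta)}$ and absorbs the first term by the elementary interpolation $\|u\|_{\bC^{\alpha+\beta-\eps}}\le\eps\|u\|_{\bC^{\alpha+\beta}}+c_\eps\|u\|_\infty$ together with \autoref{MM}. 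Your spurious factor $\tfrac12$ seems to be a vestige of trying to shortcut this.
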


To prove this theorem, we use the perturbation argument by freezing coefficients along the characterization curve as showed in \cite{Ha-Wu-Zh}. We need the following well-known fact from ODE, whose proof can be found in \cite{Ha-Wu-Zh}*{Lemma 6.5}. 
\begin{lemma}\label{HWZ}
	Let $b:\mR_+\times\mR^{d}\to\mR^{d}$ be a  measurable vector field. Suppose that for each $t>0$, $x\mapsto b(t,x)$ is continuous and
	there is a constant $c>0$ such that for all $(t,x)\in\mR_+\times\mR^d$,
	$$
	|b(t,x)|\leq c(1+|x|).
	$$
	Then, for each $x\in\mR^{d}$, there is a global solution $\theta_t$ to the following ODE:
	$$
	\dot\theta_t=b(t,\theta_t),\ \ \theta_0=x.
	$$
	Moreover, if we denote by $\sS_{x}:=\{\theta_\cdot: \theta_0=x\}$ the set of all solutions with starting point $x$, then for each $T>0$,
	\begin{align*}
	\cup_{x\in\mR^d}\cup_{\theta_\cdot\in\sS_{x}}\{\theta_T\}=\mR^d.
	\end{align*}
\end{lemma}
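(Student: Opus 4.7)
My plan is to split the proof into three stages — local existence, global extension via Grönwall, and surjectivity at time $T$ via a time-reversal trick.

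\emph{Stage 1 (local existence).} For each $x \in \mR^d$, I would invoke the Carathéodory version of the Cauchy-Peano theorem to produce a local absolutely continuous solution $\theta:[0,\tau)\to\mR^d$ of $\dot\theta_t = b(t,\theta_t)$ with $\theta_0 = x$. The hypotheses supply exactly what is required: joint measurability of $b$, continuity in the spatial variable, and local boundedness on cylinders $[0,T_0]\times\overline{B_R(x)}$ (via the linear growth bound $|b(t,x)|\le c(1+|x|)$). For any continuous trajectory $s\mapsto\xi_s$, the composition $s\mapsto b(s,\xi_s)$ is then measurable and locally integrable, which is the input needed for the Euler-polygon / fixed-point construction of a local Carathéodory solution.

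\emph{Stage 2 (global extension).} Applying Grönwall's inequality to the integral form
$$
|\theta_t| \le |x| + \int_0^t c(1+|\theta_s|)\,\dif s
$$
gives $|\theta_t| \le (|x|+1)\e^{ct}-1$ on any existence interval. This a priori bound prevents finite-time blow-up and, by the standard continuation argument for maximal Carathéodory solutions, forces the maximal interval to be $[0,\infty)$, producing the claimed global solution $\theta\in\sS_x$.

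\emph{Stage 3 (surjectivity at time $T$).} Fix $y\in\mR^d$ and introduce the time-reversed field $\tilde b(s,\xi) := -b(T-s,\xi)$ on $[0,T]\times\mR^d$, which is again jointly measurable, continuous in $\xi$, and of linear growth with the same constant $c$. Applying Stages 1-2 to the Cauchy problem $\dot{\tilde\theta}_s = \tilde b(s,\tilde\theta_s)$, $\tilde\theta_0 = y$, yields a solution on $[0,T]$. Set $x := \tilde\theta_T$ and define $\theta_t := \tilde\theta_{T-t}$ for $t\in[0,T]$; differentiating gives $\dot\theta_t = -\dot{\tilde\theta}_{T-t} = -\tilde b(T-t,\theta_t) = b(t,\theta_t)$, with $\theta_0 = x$ and $\theta_T = y$. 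One further application of Stages 1-2 extends $\theta$ to all of $[T,\infty)$ starting from the datum $y$ at time $T$, producing an element of $\sS_x$ whose value at time $T$ equals the prescribed $y$. Since $y$ was arbitrary, this proves $\cup_{x\in\mR^d}\cup_{\theta_\cdot\in\sS_x}\{\theta_T\} = \mR^d$.

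The main obstacle I anticipate is the justification of Stage 1 under the precise hypotheses stated, since the classical Cauchy-Peano theorem is usually quoted for $b$ continuous in $(t,x)$, whereas here only joint measurability together with spatial continuity is assumed. The resolution is to appeal to the Carathéodory existence theorem, whose core compatibility check is exactly that joint measurability of $b$ combined with continuity in $x$ makes $t\mapsto b(t,\theta_t)$ measurable for any continuous $\theta$, keeping the integrand well-defined for the fixed-point argument. Once that point is granted, Stage 2 is a one-line Grönwall estimate and Stage 3 is a mechanical time-reversal, so the only genuinely delicate ingredient is the initial appeal to a measurable-in-time version of Peano.
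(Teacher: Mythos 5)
The paper does not reproduce a proof of this lemma; it simply cites \cite{Ha-Wu-Zh}*{Lemma 6.5}. So there is no internal argument to compare against. Evaluated on its own terms, your three-stage plan is correct and is the expected one: Carath\'eodory existence handles the lack of joint $(t,x)$-continuity, the Gr\"onwall estimate $|\theta_t|\le(|x|+1)\e^{ct}-1$ rules out blow-up and yields a global maximal solution, and the time-reversal construction $\tilde b(s,\xi):=-b(T-s,\xi)$ is exactly the trick that turns the surjectivity claim into another application of the same existence machinery. All the hypotheses you invoke for Carath\'eodory are indeed supplied by the lemma's assumptions (joint measurability, spatial continuity for each $t>0$, linear-growth bound giving local boundedness on cylinders), and the gluing of the reversed solution on $[0,T]$ with a forward solution on $[T,\infty)$ at the datum $y$ is legitimate since absolute continuity is preserved across the concatenation. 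The single point worth being explicit about, which you flag correctly, is that continuity in $x$ is only assumed for $t>0$; since Carath\'eodory needs it only for a.e.\ $t$, the possible exceptional instant $t=0$ (or $s=T$ in the reversed problem) costs nothing. I see no gap.
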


\subsection{Bounded drift case}\label{UnS}
In this subsection, assuming $b\in\mL_{loc}^\infty(\bC^\beta)$, we prove the following apriori estimate.

\begin{theorem}\label{CO}
	Let $ \alpha\in(1/2,2) $, $\gamma\in(\frac{1-\alpha}{\alpha}\vee0,1]$ and $\beta\in((1-\alpha)\vee0,(\alpha\wedge1)\gamma)$ with $\alpha+\beta\notin\mN$. Under the conditions {\bf (H$^{\beta}_{\kappa}$)}, {\bf (H$^{\gamma}_{\sigma}$)}, and $b\in\mL^\infty_{loc}(\bC^\beta)$, for any $T>0$ and $f \in \mL_T^\infty(\bC^{\beta})$, there is a constant $ c=c(T,\|b\|_{\mL^\infty_{T}(\bC^\beta)},d,\alpha,\beta,\gamma)>0 $ such that for any classical solution $ u $ of PDE \eqref{FE},
	\begin{align}\label{co}
	\|u\|_{\mL_T^\infty(\bC^{\alpha+\beta})}\leq c \|f\|_{\mL_T^\infty(\bC^{\beta})}.
	\end{align}
\end{theorem}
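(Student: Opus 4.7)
The plan is to use the perturbation argument by freezing coefficients of \eqref{FE} along a characteristic of the drift and then exploit the Littlewood--Paley representation together with the kernel estimate of \autoref{CRU}. Since $\alpha+\beta\notin\mN$, the Besov--H\"{o}lder equivalence recalled in \autoref{DF} reduces the target \eqref{co} to a uniform dyadic bound $2^{j(\alpha+\beta)}\|\Delta_j u(t,\cdot)\|_{\infty}\lesssim\|f\|_{\mL_T^\infty(\bC^\beta)}$ on $[0,T]$. By \autoref{HWZ} applied to the ODE $\dot\theta_s=b(s,\theta_s)$ (globally solvable since $b$ is bounded under $b\in \mL_T^\infty(\bC^\beta)$), every point of $\mR^d$ can be realized as $\theta_t$ for some characteristic starting at time $0$, so it suffices to estimate $|\Delta_j u(t,\theta_t)|$ uniformly in the choice of $\theta$.

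Fix such $\theta$ and freeze the coefficients along it: set $\bar\sigma(s):=\sigma(s,\theta_s)$, $\bar\kappa(s,z):=\kappa(s,\theta_s,z)$, $\bar b(s):=b(s,\theta_s)$. The frozen operator $\bar\sL^{(\alpha)}:=\sL^{(\alpha)}_{\bar\kappa,\bar\sigma}$ has $x$-independent coefficients and its semigroup with drift $\bar b$ is convolution with the density of $X^{\bar\kappa,\bar\sigma}_{s,t}+\int_s^t\bar b(r)\,\dif r$, i.e.\ a translate of the density $p^{\bar\kappa,\bar\sigma}_{s,t}$ governed by \autoref{CRU}. Rewriting \eqref{FE} as $\partial_tu=\bar\sL^{(\alpha)}u+\bar b\cdot\nabla u+F$ with
\begin{align*}
F(s,x):=f(s,x)+\big(\sL^{(\alpha)}_{\kappa,\sigma}-\bar\sL^{(\alpha)}\big)u(s,x)+\big(b(s,x)-\bar b(s)\big)\cdot\nabla u(s,x),
\end{align*}
applying Duhamel's formula, using the identity $\theta_t-\int_s^t\bar b(r)\,\dif r=\theta_s$, and then moving $\Delta_j$ from $F$ to the kernel via self-adjointness, I obtain the representation
\begin{align*}
\Delta_ju(t,\theta_t)=\int_0^t\int_{\mR^d}F(s,\theta_s-w)\,\Delta_jp^{\bar\kappa,\bar\sigma}_{s,t}(w)\,\dif w\,\dif s,
\end{align*}
which is exactly the form controlled by \eqref{CruF}.

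I then estimate the three pieces of $F$ separately. For the source $f$, the kernel $\Delta_jp^{\bar\kappa,\bar\sigma}_{s,t}$ has vanishing integral for $j\ge1$ (since $\phi_j$ is supported away from the origin), so subtracting $f(s,\theta_s)$ and using $f\in\bC^\beta$ produces a $|w|^\beta$ factor and yields $\lesssim 2^{-j(\alpha+\beta)}\|f\|_{\mL_T^\infty(\bC^\beta)}$ by \autoref{CRU} with moment $\beta$. For the operator-difference term, splitting into the $\kappa$-part and the $\sigma$-part and using $|\kappa(s,x,z)-\bar\kappa(s,z)|\le c_0|x-\theta_s|^\beta$ together with $\|\sigma(s,x)-\bar\sigma(s)\|\le c_0|x-\theta_s|^\gamma$, combined with a first- or second-order Taylor expansion of $u$ on $\{|z|\le1\}$ (according as $\alpha<1$ or $\alpha>1$) and direct increment bounds on $\{|z|>1\}$, yields a pointwise estimate
\begin{align*}
|(\sL^{(\alpha)}_{\kappa,\sigma}-\bar\sL^{(\alpha)})u(s,x)|\lesssim \big(|x-\theta_s|^\beta+|x-\theta_s|^\gamma\big)\|u(s,\cdot)\|_{\bC^{\alpha+\beta}}.
\end{align*}
Inserted into the representation and integrated against $|\Delta_jp^{\bar\kappa,\bar\sigma}_{s,t}|$ via \autoref{CRU} with moments $\beta$ and $\gamma$ (both below $\alpha$ under the hypothesis $\beta<(\alpha\wedge1)\gamma$), this gives $\lesssim 2^{-j(\alpha+\beta)}\|u\|_{\mL_T^\infty(\bC^{\alpha+\beta})}$. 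The drift remainder is handled similarly via $|b(s,x)-\bar b(s)|\lesssim |x-\theta_s|^\beta$ for $|x-\theta_s|\le 1$, the linear-tail bound of \autoref{HB} for larger separations, and $\alpha+\beta>1$ so that $\|\nabla u\|_\infty\le\|u\|_{\bC^{\alpha+\beta}}$.

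Combining the three bounds produces an inequality of the form
\begin{align*}
\|u\|_{\mL_T^\infty(\bC^{\alpha+\beta})}\le c\|f\|_{\mL_T^\infty(\bC^\beta)}+c\,T^{\delta}\|u\|_{\mL_T^\infty(\bC^{\alpha+\beta})}+c\|u\|_{\mL_T^\infty}
\end{align*}
for some $\delta>0$ with $c=c(\|b\|_{\mL_T^\infty(\bC^\beta)},c_0,d,\alpha,\beta,\gamma)$. The last term is controlled by $T\|f\|_{\mL_T^\infty}$ via the maximum principle \autoref{MM}, and choosing $T=T_0$ small enough to make $cT_0^\delta<1/2$ absorbs the middle term. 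Iterating this estimate on the successive intervals $[kT_0,(k+1)T_0]$ (combined with the interpolation $\|u\|_{\bC^s}\le \eta\|u\|_{\bC^{\alpha+\beta}}+C_\eta\|u\|_\infty$ from \autoref{INTER} to pass the resulting non-zero data into the $L^\infty$ term) delivers \eqref{co} on all of $[0,T]$. The main technical obstacle will be the pointwise operator-difference estimate for $(\sL^{(\alpha)}_{\kappa,\sigma}-\bar\sL^{(\alpha)})u$: because $\nu^{(\alpha)}$ may be cylindrical and highly anisotropic, the $z$-splitting, the choice of how many Taylor terms to keep, and the placement of the Hölder exponents must be coordinated to match both the singular near-zero behaviour and the weighted integrability in \eqref{z}, which is precisely where the restrictions $\beta<(\alpha\wedge1)\gamma$ and $\alpha>1/2$ (cf.\ \autoref{R38}) become unavoidable.
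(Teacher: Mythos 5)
Your overall strategy is the same as the paper's: fix a characteristic of $b$, freeze $\kappa$ and $\sigma$ along it, apply Duhamel's formula for the frozen constant-coefficient semigroup, and then control the three pieces by the kernel moment estimate of \autoref{CRU}. The paper realizes the freezing by the change of variables $\tilde u(t,x):=u(t,x+\theta_t)$ with $\dot\theta_t=-b(t,\theta_t)$ (which reduces the drift remainder to $\tilde b(t,x)=b(t,x+\theta_t)-b(t,\theta_t)$, vanishing at $x=0$), while you keep $u$ and bake the shift $\int_s^t\bar b$ into the propagator; these are equivalent.

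The real gap is in how you close the loop. You claim an inequality of the form
$\|u\|_{\mL_T^\infty(\bC^{\alpha+\beta})}\le c\|f\|_{\mL_T^\infty(\bC^\beta)}+cT^\delta\|u\|_{\mL_T^\infty(\bC^{\alpha+\beta})}+c\|u\|_{\mL_T^\infty}$
and absorb the middle term by shrinking $T$. But \autoref{CRU} gives the bound $\int_0^t\int|x|^{\beta'}|\Delta_jp_{s,t}|\,\dif x\,\dif s\lesssim 2^{-(\alpha+\beta')j}$ with a constant $c(T)$ that is not asserted to vanish as $T\to0$; no $T^\delta$ gain follows from the stated lemma. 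You would have to either prove a small-time refinement of \autoref{CRU}, or (as the paper does) avoid the issue altogether. The paper's mechanism is different: in Lemmas~\ref{LeC06} and \ref{LeC07} the coefficient-difference and drift-remainder pieces are bounded by $2^{-(\alpha+\beta)j}\|u\|_{\mL^\infty_T(\bC^{\alpha+\eps})}$ and $2^{-(\alpha+\beta)j}\|u\|_{\mL^\infty_T(\bC^{\alpha+\beta-\eps})}$, respectively, with a \emph{strictly weaker} norm of $u$ (for a small $\eps>0$), so the post-Duhamel inequality has a lower-order $u$-norm on the right that can be absorbed by interpolation $\|u\|_{\bC^{\alpha+\beta-\eps}}\le\eta\|u\|_{\bC^{\alpha+\beta}}+c_\eta\|u\|_\infty$ for $\eta$ small, together with \autoref{MM}, with no small-time argument at all. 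Your proposal forfeits this gain by writing $\|u(s,\cdot)\|_{\bC^{\alpha+\beta}}$ in the pointwise difference bound; without the weaker norm (and without a proven $T^\delta$) the final inequality $\|u\|_{\bC^{\alpha+\beta}}\lesssim\|f\|+\|u\|_{\bC^{\alpha+\beta}}$ is vacuous.

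A second, related imprecision is your displayed pointwise bound $|(\sL^{(\alpha)}_{\kappa,\sigma}-\bar\sL^{(\alpha)})u(s,x)|\lesssim(|x-\theta_s|^\beta+|x-\theta_s|^\gamma)\|u(s,\cdot)\|_{\bC^{\alpha+\beta}}$ with the parenthetical claim that both exponents lie below $\alpha$ because $\beta<(\alpha\wedge1)\gamma$. For $\alpha\in(1/2,1)$ the hypothesis allows $\gamma$ up to $1$, so $\gamma$ can exceed $\alpha$, and then \autoref{CRU} (which requires moments strictly below $\alpha$) cannot be invoked with exponent $\gamma$. In the paper's Lemma~\ref{LeC06}, case $\alpha\in(0,1)$, the $\sigma$-difference part is bounded by $\|u\|_{\bC^{\alpha+\eps}}|x|^{(\alpha+\eps)\gamma}+\|u\|_{\bC^{\alpha-\eps}}|x|^{(\alpha-\eps)\gamma}$: the H\"older increment of $u$ is taken at order $\alpha\pm\eps$ rather than at order $1$, which both pushes the moment exponent down to $(\alpha\pm\eps)\gamma$ and, crucially, attaches only a $\bC^{\alpha\pm\eps}$ norm of $u$. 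You flag the exponent bookkeeping as the main technical obstacle, but your stated bound is not the right one to carry forward, and correcting it is exactly what delivers the weaker $u$-norm that closes the argument without any $T^\delta$.
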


\vspace{0.5em}

Fix $x_0\in\mR^{d}$. Let $\theta_t$ solve the following ODE in $\mR^{d}$:
$$
\dot\theta_t=-b(t,\theta_t), \ \ \theta_0=x_0.
$$
Define
$$
\tilde u(t,x):=u(t,x+\theta_t),\ \ \tilde f(t,x):=f(t,x+\theta_t),\ \ \tilde \sigma(t,x):=\sigma (t,x+\theta_t),
$$
$$
\tilde \kappa(t, x,z):=\kappa(t,x+\theta_t,z),\ \ \tilde \sigma_0(t):=\tilde \sigma(t,0),\ \ \tilde \kappa_0(t,z):=\tilde \kappa(t,0,z),
$$
and 
\begin{align*}
\tilde b(t,x):=b(t,x+\theta_t)-b(t,\theta_t).
\end{align*}
It is easy to see that $\tilde u$ satisfies the following equation:
\begin{align*}
\p_t \tilde u=\sL^{(\alpha)}_{\tilde\kappa_0, \tilde\sigma_0} u+ \tilde b\cdot\nabla  \tilde u+\(\sL^{(\alpha)}_{\tilde\kappa,\tilde\sigma}-\sL^{(\alpha)}_{ \tilde\kappa_0,\tilde\sigma_0}\) \tilde u+ \tilde f,
\end{align*}
where (see \eqref{LL})
$$
\sL^{(\alpha)}_{ \tilde\kappa_0,\tilde\sigma_0} u(t,x):=\int_{\mR^d}\Xi^{(\alpha)} u(t,x\,;\tilde\sigma_0 z)  \tilde\kappa_0(t,z)\nu^{(\alpha)}(\dif z)
$$
with
\begin{align*}
\Xi^{(\alpha)} u(t,x\,;\tilde\sigma_0 z):=u(t,x+\tilde\sigma_0 z)-u(t,x)-\tilde\sigma_0 z^{(\alpha)}\cdot \nabla u(t,x).
\end{align*}
Under {\bf (H$^{\beta}_{\kappa}$)} , {\bf (H$^{\gamma}_{\sigma}$)} and $b\in\mL^\infty_{loc}(\bC^\beta)$, we have
\begin{align}\label{AM}
|\tilde b (t,x)|+|\tilde \kappa(t,x,z)-\tilde \kappa_0(t,z)|\lesssim|x|^\beta\ \ \hbox{and} \ \ |\tilde \sigma(t,x)-\tilde \sigma_0(t)|\lesssim|x|^\gamma.
\end{align}
Taking $\kappa(t,z)=\tilde{\kappa_0}(t,z)$ and $\sigma(t)=\tilde{\sigma_0}(t)$ in \eqref{X-def} and obesrving that \eqref{con0} and \eqref{con1} are still valid in this case, we have a smooth density $p_{s,t}$ for $X_{s,t}^{\tilde{\kappa_0},\tilde{\sigma_0}}$. Define
\begin{align*}
 P_{s,t} f(s,x):=\mE f(s,x+X_{s,t}^{\tilde{\kappa_0},\tilde{\sigma_0}})=\int_{\mR^d} f(s,x+y)p_{s,t}(y) \dif y,\quad \forall x\in\mR^d.
 \end{align*}
Then, by Duhamel's formula \cite{Ch-Ha-Zh}*{Lemma 3.1} we have
\begin{align}\label{DH}
\tilde u(t,x)
=&\int^t_0 P_{s,t}\(\sL^{(\alpha)}_{ \tilde\kappa, \tilde\sigma}-\sL^{(\alpha)}_{\tilde \kappa_0, \tilde\sigma_0}\)  \tilde u(s,x) \dif s
+\int^t_0 P_{s,t}( \tilde b\cdot\nabla \tilde  u)(s,x)\dif s+\int^t_0 P_{s,t} \tilde f(s,x)\dif s.
\end{align}
Below, without loss of generality, we drop the tilde over $ u,\kappa,\kappa_0,\sigma,\sigma_0,b$ and $ f $.
\vspace{0.5em}

We prepare the folowing lemmas which are analogues of \cite{Ha-Wu-Zh}*{Lemma 6.6, 6.8, 6.9}.

\begin{lemma}\label{LeC06}
    Let $ \alpha\in(0,2) $, $ \gamma\in(0,1]$ and $\beta\in(0,(\alpha\wedge1)\gamma)$. Under conditions {\bf (H$^{\beta}_\kappa$)} and {\bf (H$^{\gamma}_{\sigma}$)}, for any $ T>0 $,  there is a constant $ c>0 $ and $\eps\in(0,\beta)$ such that for all $ j\in \mN $, $ t\in [0,T] $ and $u\in\mL^\infty_T (\bC^{\alpha+\eps})$,
    \begin{align}\label{BM01}
    \int^t_0|\Delta_j P_{s,t}\(\sL^{(\alpha)}_{ \kappa, \sigma}-\sL^{(\alpha)}_{   \kappa_0, \sigma_0}\) u|(s,0)\dif s
    \leq c 2^{-(\alpha+\beta)j}\|u\|_{\mL^\infty_T(\bC^{\alpha+\eps})}.
    \end{align}
\end{lemma}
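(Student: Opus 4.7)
My plan is to bound the integrand pointwise in $y$ and then invoke the integral-form heat kernel estimate \eqref{CruF}. First, since $P_{s,t}$ is a convolution operator (against the density $p_{s,t}(-\cdot)$) and $\Delta_j$ is symmetric in the sense of \eqref{BO}, we may rewrite
\[
\Delta_j P_{s,t}g(s,0)=\int_{\mR^d}g(s,y)\,(\Delta_j p_{s,t})(-y)\,\dif y,
\]
so that the left-hand side of \eqref{BM01} is controlled by
\[
\int_0^t\int_{\mR^d}|(\sL^{(\alpha)}_{\kappa,\sigma}-\sL^{(\alpha)}_{\kappa_0,\sigma_0})u(s,y)|\,|\Delta_j p_{s,t}(y)|\,\dif y\,\dif s.
\]
The goal is therefore reduced to the pointwise estimate
\[
|(\sL^{(\alpha)}_{\kappa,\sigma}-\sL^{(\alpha)}_{\kappa_0,\sigma_0})u(s,y)|\le c\,|y|^{\beta}\|u\|_{\mL^\infty_T(\bC^{\alpha+\eps})}
\]
for some $\eps\in(0,\beta)$; once this is secured, \eqref{CruF} applied with the exponent $\beta$ produces exactly the required bound $c\,2^{-(\alpha+\beta)j}\|u\|_{\mL^\infty_T(\bC^{\alpha+\eps})}$.

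To obtain the pointwise bound I would split
\[
\sL^{(\alpha)}_{\kappa,\sigma}-\sL^{(\alpha)}_{\kappa_0,\sigma_0}=\bigl(\sL^{(\alpha)}_{\kappa,\sigma}-\sL^{(\alpha)}_{\kappa_0,\sigma}\bigr)+\bigl(\sL^{(\alpha)}_{\kappa_0,\sigma}-\sL^{(\alpha)}_{\kappa_0,\sigma_0}\bigr).
\]
For the first summand I would factor out $|\kappa(s,y,z)-\kappa_0(s,z)|\le c_0(|y|^{\beta}\wedge 1)\le c_0|y|^{\beta}$ and control the remaining factor $\Xi^{(\alpha)}u(s,y;\sigma(s,y)z)$ by Taylor/Hölder expansion for $|z|\le 1$ and by crude $L^\infty$/Lipschitz bounds for $|z|>1$; the resulting integral against $\nu^{(\alpha)}$ converges by \eqref{z} when $\eps$ is sufficiently small. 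For the second summand I would use
\[
\Xi^{(\alpha)}u(y;\sigma z)-\Xi^{(\alpha)}u(y;\sigma_0 z)=(\sigma-\sigma_0)z\cdot\int_0^{1}\bigl(\nabla u(y+\sigma_0 z+r(\sigma-\sigma_0)z)-\nabla u(y)\bigr)\,\dif r
\]
when $\alpha\in[1,2)$, and a plain first-order Hölder difference $u(y+\sigma z)-u(y+\sigma_0 z)$ when $\alpha\in(1/2,1)$; combined with $|\sigma(s,y)-\sigma_0(s)|\le c_0(|y|^{\gamma}\wedge 1)$ and the Hölder regularity of $u$ (or $\nabla u$), this yields an integrand of size $\|u\|_{\bC^{\alpha+\eps}}|y|^{\gamma}|z|^{\alpha+\eps}$ when $\alpha\ge 1$ and $\|u\|_{\bC^{\alpha+\eps}}|y|^{\gamma(\alpha+\eps)}|z|^{\alpha+\eps}$ when $\alpha<1$, integrable against $\nu^{(\alpha)}$ by \eqref{z}. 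On $\{|y|\le 1\}$ the exponent of $|y|$ strictly exceeds $\beta$ as a consequence of $\beta<(\alpha\wedge 1)\gamma$ and a small enough choice of $\eps$, while on $\{|y|\ge 1\}$ the uniform bound $1\le|y|^{\beta}$ closes the estimate.

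The main obstacle I anticipate is the case analysis over $\alpha\in(1/2,1)$, $\alpha=1$, $\alpha\in(1,2)$: the form of $z^{(\alpha)}$ inside $\Xi^{(\alpha)}$ changes across these regimes, and for $\alpha=1$ the cancellation assumption in \textbf{(H$^{\beta}_{\kappa}$)} must be invoked to make the linear term in the difference $\sL^{(1)}_{\kappa,\sigma}-\sL^{(1)}_{\kappa_0,\sigma}$ well-defined. The auxiliary exponent $\eps>0$ has to be tuned so that $\alpha+\eps\notin\mN$, the Taylor/Hölder remainders in $z$ are integrable against $\nu^{(\alpha)}$, and $\gamma(\alpha+\eps)>\beta$; all three can be satisfied simultaneously thanks to the strict inequality $\beta<(\alpha\wedge 1)\gamma$. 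With the pointwise bound established, \autoref{CRU} yields the claimed $2^{-(\alpha+\beta)j}$ decay.
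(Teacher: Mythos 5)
Your overall strategy matches the paper's: split the difference into a $\kappa$-perturbation ($J_1=\sL^{(\alpha)}_{\kappa,\sigma}-\sL^{(\alpha)}_{\kappa_0,\sigma}$) and a $\sigma$-perturbation ($J_2=\sL^{(\alpha)}_{\kappa_0,\sigma}-\sL^{(\alpha)}_{\kappa_0,\sigma_0}$), obtain a pointwise bound carrying a power of $|y|$, and feed it into the weighted kernel estimate \eqref{CruF}. The treatment of $J_1$ is sound. But there is a genuine gap in your treatment of $J_2$ when $\alpha<1$ (and by the same token in the large-jump piece at $\alpha=1$). You bound
\[
|u(y+\sigma(y)z)-u(y+\sigma_0 z)|\lesssim \|u\|_{\bC^{\alpha+\eps}}\,|y|^{\gamma(\alpha+\eps)}\,|z|^{\alpha+\eps}
\]
and assert this is integrable against $\nu^{(\alpha)}$ by \eqref{z}. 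It is not: \eqref{z} controls $|z|^{\gamma_1}\wedge|z|^{\gamma_2}$ with $\gamma_1>\alpha>\gamma_2$, and the single power $|z|^{\alpha+\eps}$ diverges against an $\alpha$-stable measure at infinity. If instead you replace the difference by the crude bound $2\|u\|_{\infty}$ on $\{|z|>1\}$, the factor $|y|^{\gamma(\alpha+\eps)}$ disappears entirely, and then \eqref{CruF} gives only $2^{-\alpha j}$ for that piece, not the required $2^{-(\alpha+\beta)j}$.

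The paper resolves this by an asymmetric choice of H\"older exponents on the two regimes of $|z|$: on $\{|z|\le1\}$ use $[u]_{\bC^{\alpha+\eps}}$ (giving $|y|^{\gamma(\alpha+\eps)}|z|^{\alpha+\eps}$, integrable near $0$), while on $\{|z|>1\}$ use the \emph{weaker} seminorm $[u]_{\bC^{\alpha-\eps}}$, giving $|y|^{\gamma(\alpha-\eps)}|z|^{\alpha-\eps}$, which both retains an $|y|$-power and is integrable at infinity since $\alpha-\eps<\alpha$. The hypothesis $\beta<(\alpha\wedge1)\gamma$ ensures $\eps$ can be chosen so that $\gamma(\alpha-\eps)>\beta$, which is what closes the estimate. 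You did perform a small/large-jump split for the $\kappa$-term (where the $|y|^\beta$ factor comes from $\kappa$ itself, independently of $z$, so a crude $L^\infty$ bound on the tail is harmless), but you omitted the analogous — and here indispensable — device for the $\sigma$-term, where the $|y|$ factor must be extracted from the displacement $(\sigma(y)-\sigma_0)z$ itself.
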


\begin{proof}
    First of all, by defnitions, we have
    \begin{align*}
    \left |\(\sL^{(\alpha)}_{ \kappa, \sigma}-\sL^{(\alpha)}_{   \kappa_0, \sigma_0}\) u(s,x) \right |
    &\leq \left| (\sL^{(\alpha)}_{\kappa,\sigma}-\sL_{\kappa_0,\sigma}^{(\alpha)} ) u(s,x)\right|
    +\left|(\sL_{\kappa_0,\sigma}^{(\alpha)}-\sL_{\kappa_0,\sigma_0}^{(\alpha)}) u(s,x)\right|\\
    & :=J_1+J_2 .
    \end{align*}
  For simplicity of notation, we drop the time variable $t$ and the superscript $ \alpha$ of $ \nu^{(\alpha)} $. For any $\eps>0$, by \eqref{z} and \eqref{AM}, we obtain that for $u\in\bC^{\alpha+\eps}$,
    \begin{align*}
    J_1
    &=\left|\int_{\mR^d} \Xi^{(\alpha)} u(x\,;\sigma z)\cdot (\kappa(x,z)-\kappa_0(z))\nu(\dif z)\right|\\
    &\lesssim |x|^{\beta} \int_{\mR^d}\big|\Xi^{(\alpha)} u(x\,;\sigma z)\big|\nu(\dif z)
    \lesssim |x|^{\beta} \|u\|_{\bC^{\alpha+\eps}}, 
    \end{align*}
    where provided $ |\Xi^{(\alpha)} f(x;z)|\leq \|f\|_{\bC^{\alpha+\eps}}(|z|^{\alpha+\eps}\wedge 1) $. Therefore, by \eqref{CruF}, we have
    \begin{align*}
	\int_0^t|\Delta_jP_{s,t}J_1|(s,0)\dif s
	\lesssim \|u\|_{\bC^{\alpha+\eps}}\int_0^t\int_{\mR^d}|x|^\beta|\Delta_jp_{s,t}(x)|\dif x\dif s
	\lesssim2^{-j(\alpha+\beta)}\|u\|_{\bC^{\alpha+\eps}}.
	\end{align*}
	Next, we estimate $J_2$ for $\alpha\in(0,1)$, $\alpha\in(1,2)$, and $\alpha=1$ separately. 
		
	(1) {\bf Case: $\alpha\in(0,1)$}.
	Choosing $\eps\in(0,\beta)$ such that $\beta<(\alpha-\eps)\gamma$ and $\alpha+\eps<1$, by \eqref{AM}, we have
  \begin{align*}
    J_2
    &\leq \|\kappa\|_{\infty}  \(\int_{|z|\leq 1}+\int_{|z|>1}\) |u(x+\sigma(x)z)-u(x+\sigma(0)z)| \nu^{(\alpha)}(\dif z)\\
    &\lesssim \|u\|_{\bC^{\alpha+\eps}} \([\sigma]_{\bC^\gamma}^{\alpha+\eps}|x|^{(\alpha+\eps)\gamma}\int_{|z|\le1}|z|^{\alpha+\eps}\nu(\dif z)\)
    +\|u\|_{\bC^{\alpha-\eps}}\([\sigma]_{\bC^\gamma}^{\alpha-\eps}|x|^{(\alpha-\eps)\gamma}\int_{|z|>1}|z|^{\alpha-\eps}\nu(\dif z)\).
    \end{align*}
	Hence, by \eqref{CruF},  we get that for all $u\in\bC^{\alpha+\eps}$,
	\begin{align*}
	\int_0^t|\Delta_jP_{s,t}J_2|(s,0)\dif s
	&\lesssim \|u\|_{\bC^{\alpha+\eps}} \int_0^t\int_{\mR^d}(|x|^{(\alpha+\eps)\gamma}+|x|^{(\alpha-\eps)\gamma})|\Delta_jp_{s,t}(x)|\dif x\dif s\\
	&\lesssim 2^{-j\alpha} (2^{-j(\alpha+\eps)\gamma}+2^{-j(\alpha-\eps)\gamma})\|u\|_{\bC^{\alpha+\eps}}
	\lesssim 2^{-j(\alpha+\beta)}\|u\|_{\bC^{\alpha+\eps}}.
	\end{align*}
	
	(2) {\bf Case: $ \alpha\in(1,2) $}. Choosing $\eps\in(0,\beta)$ such that $\alpha+\eps<2$, by\eqref{AM} , we have
	\begin{align*}
    J_2
    &\lesssim  \(\int_{|z|\leq1}  +\int_{|z|>1}\)|(\sigma(x)-\sigma_0)z|\int_0^1|\nabla u(x+r\sigma(x)z+(1-r)\sigma_0z)-\nabla u(x)|\dif r \, \nu(\dif z)\\
    &\lesssim [\sigma]_{\bC^\gamma}|x|^\gamma  \(\|\nabla u\|_{\bC^{\alpha+\eps-1}}\|\sigma\|_{\infty}^{\alpha+\eps-1}\int_{|z|\leq1}|z|^{\alpha+\eps}\nu(\dif z)
    +\|\nabla u\|_{\infty}\int_{|z|>1}|z|\nu(\dif z)\).
    \end{align*}
	Therefore, by \eqref{CruF} and $ \beta<\gamma $, we obtain that for all $ u\in \bC^{\alpha+\eps} $,
	\begin{align*}
	\int_0^t|\Delta_jP_{s,t}J_2|(s,0)\dif s
	&\lesssim \|u\|_{\bC^{\alpha+\eps}}	\int_0^t\int_{\mR^d}|x|^\gamma|\Delta_jp_{s,t}(x)|\dif x\dif s
	\lesssim 2^{-j(\alpha+\beta)}\|u\|_{\bC^{\alpha+\eps}}.
	\end{align*}

	(3) {\bf Case: $ \alpha=1 $}. As above proofs, we decompose the integral on $\mR^d$ into two parts, the small jump part and the large jump part. The small jump part, that is the integral on $\{z\in \mR^d\mid|z|\le1\}$, is same as the case of $\alpha\in(1,2)$. The large jump part, that is  the integral on $\{z\in \mR^d \mid |z|>1\}$ is same as the case of $\alpha\in(0,1)$. Hence, it is easy to see that
	\begin{align*}
	\int_0^t|\Delta_jP_{s,t}J_2|(s,0)\dif s\lesssim 2^{-j(\alpha+\beta)}\|u\|_{\bC^{\alpha+\eps}}.
	\end{align*}
    Combining $J_1$ with $J_2$, we complete the proof.
\end{proof}
\begin{lemma}\label{LeC07}
    Let $ \alpha\in(1/2,2)  $
    and $ \beta\in((1-\alpha)\vee0,\alpha\wedge 1) $. Under the condition $b\in\mL^\infty_{loc}(\bC^\beta)$, for $ T>0 $ and $ \eps\in(0,\alpha+\beta-1) $, there is a constant $c>0$ such that for all $j\in \mN$, $t\in[0,T]$ and $u\in \mL^\infty_T ( \bC^{\alpha+\beta-\eps} )$,
    \begin{align}\label{BM02}
    \int^t_0|\Delta_j P _{s,t}(b\cdot\nabla u)|(s,0)\dif s
    &\leq c 2^{-(\alpha+\beta)j}\|u\|_{\mL^\infty_T(\bC^{\alpha+\beta-\eps})}.
    \end{align}
\end{lemma}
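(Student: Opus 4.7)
The plan is to reduce \eqref{BM02} to a direct application of Lemma~\ref{CRU} with moment exponent $\beta$, exploiting the feature that, after freezing coefficients along the characteristic $\theta_t$ and dropping tildes, the transformed drift $b$ satisfies $b(s,0)=0$.

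First I would rewrite $\Delta_j P_{s,t}$ in pure integral form. Since $P_{s,t}$ is convolution with $p_{s,t}(-\cdot)$ and $\Delta_j$ is convolution with the radial (hence even) kernel $\check\phi_j$, the two operators commute and a short computation using evenness of $\check\phi_j$, evaluated at $x=0$, yields
\begin{align*}
\Delta_j P_{s,t}(b\cdot\nabla u)(s,0)=\int_{\mR^d}\Delta_j p_{s,t}(y)\,b(s,y)\cdot\nabla u(s,y)\,\dif y.
\end{align*}

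Next I would exploit two pointwise bounds for the integrand. On the one hand, after dropping tildes $b(s,0)=0$ and $b(s,\cdot)\in\bC^\beta$ with $\beta<1$, so the global H\"older estimate (equivalently \eqref{AM}) gives
\begin{align*}
|b(s,y)|=|b(s,y)-b(s,0)|\leq [b(s,\cdot)]_{\bC^\beta}\,|y|^\beta,\qquad y\in\mR^d.
\end{align*}
On the other hand, the constraint $\eps<\alpha+\beta-1$ forces $\alpha+\beta-\eps>1$, so $\bC^{\alpha+\beta-\eps}\hookrightarrow\bC^1$ delivers $\|\nabla u(s,\cdot)\|_\infty\leq \|u(s,\cdot)\|_{\bC^{\alpha+\beta-\eps}}$. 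Plugging both estimates into the integral identity and integrating in $s\in[0,t]$,
\begin{align*}
\int_0^t|\Delta_j P_{s,t}(b\cdot\nabla u)|(s,0)\,\dif s\lesssim \|b\|_{\mL^\infty_T(\bC^\beta)}\|u\|_{\mL^\infty_T(\bC^{\alpha+\beta-\eps})}\int_0^t\!\int_{\mR^d}|y|^\beta|\Delta_j p_{s,t}(y)|\,\dif y\,\dif s.
\end{align*}
Since $\beta\in[0,\alpha)$, Lemma~\ref{CRU} bounds this space-time integral by $c\,2^{-(\alpha+\beta)j}$, which is exactly \eqref{BM02}. Unlike the proof of Lemma~\ref{LeC06}, no separate treatment of the ranges $\alpha\in(0,1)$, $\alpha=1$, $\alpha\in(1,2)$ is needed.

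The genuine obstacle is conceptual rather than technical: had we not frozen coefficients along the characteristic (so that $b(s,0)$ could be nonzero), the natural decomposition $b=(b-b(s,0))+b(s,0)$ would leave a stubborn term $b(s,0)\cdot\int\Delta_j p_{s,t}(y)[\nabla u(s,y)-\nabla u(s,0)]\,\dif y$, whose best bound via Lemma~\ref{CRU} with moment exponent $\alpha+\beta-1-\eps$ is $2^{-(2\alpha+\beta-1-\eps)j}$; this is strictly weaker than $2^{-(\alpha+\beta)j}$ in the supercritical regime $\alpha<1$. Thus the freezing-coefficient change of variables is engineered precisely to kill this term, and the lemma then reduces to Lemma~\ref{CRU} applied with the moment exponent $\beta$.
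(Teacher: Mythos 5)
Your argument is essentially the paper's own proof: evaluate $\Delta_j P_{s,t}(b\cdot\nabla u)(s,0)$ as $\int_{\mR^d}\Delta_j p_{s,t}(y)\,(b\cdot\nabla u)(s,y)\,\dif y$, bound $|b(s,y)|\lesssim|y|^\beta$ using \eqref{AM} (i.e.\ that the frozen drift vanishes at the origin), control $\|\nabla u\|_\infty$ by $\|u\|_{\bC^{\alpha+\beta-\eps}}$ thanks to $\alpha+\beta-\eps>1$, and invoke Lemma~\ref{CRU} with moment exponent $\beta$. The closing remark about why the characteristic change of variables is indispensable in the supercritical regime is a useful observation not made explicit in the paper, and it correctly pinpoints the obstruction, but the core argument is identical.
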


\begin{proof}
  By the \eqref{AM} and \eqref{CruF}, we have	
    \begin{align*}
    \int^t_0|\Delta_j P _{s,t}(b\cdot\nabla u)|(s,0)\dif s 
    &=\int^t_0 \left| \int_{\mR^d}\Delta_j p_{s,t}(x)\cdot (b\cdot\nabla u)(s,x)\dif x\right| \dif s\\
   &\lesssim\int_0^t \|\nabla u(s)\|_\infty \int_{\mR^d} |x|^\beta |\Delta_j p_{s,t}(x)| \dif x \dif s\lesssim 2^{-(\alpha+\beta)j} \|u\|_{\mL^\infty_T(\bC^{\alpha+\beta-\eps})},
    \end{align*}
    where we used the fact that $ 1<\alpha+\beta-\eps $.   
 \end{proof}

\begin{lemma}\label{LeC08}
    Let $ \alpha\in(0,2)  $ and $\beta\in\mR_+$. For any $T>0$, there is a constant $c>0$ such that for all $j\in \mN$,  $t\in[0,T]$ and $f \in \mL^\infty_T ( \bC^{\beta})$,
    \begin{align}\label{BM03}
    \int^t_0|\Delta_j P_{s,t}f|(s,0)\dif s
    &\leq c 2^{-(\alpha+\beta)j}\|f\|_{ \mL^\infty_T ( \bC^{\beta})}.
    \end{align}
\end{lemma}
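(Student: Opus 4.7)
The plan is to exploit two structural features of the left-hand side: the convolution form of the semigroup $P_{s,t}$ and the vanishing-moments property of the dyadic block $\Delta_j p_{s,t}$ at high frequency. Together with the crucial estimate \eqref{CruF}, these reduce the problem to a routine Taylor expansion.

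First, since $P_{s,t} g(x) = \int_{\mR^d} g(x+w)\,p_{s,t}(w)\,dw$ is a convolution in $x$ and $\check\phi_j$ is even (because $\phi_j$ is radial), the self-adjointness of $\Delta_j$ recalled in \eqref{DE} together with a change of variable yields
\begin{align*}
\Delta_j P_{s,t} f(s, 0) = \int_{\mR^d} f(s, w)\,\Delta_j p_{s,t}(w)\,dw.
\end{align*}
For $j\ge 1$, $\phi_j$ is supported in the annulus $\{2^{j-1}\leq |\xi|\leq 2^{j+1}\}$, so differentiating $\widehat{\Delta_j p_{s,t}}(\xi)=\phi_j(\xi)\widehat{p_{s,t}}(\xi)$ at $\xi=0$ gives $\int_{\mR^d} w^\gamma \Delta_j p_{s,t}(w)\,dw=0$ for every multi-index $\gamma$. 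Letting $T_N f(s,\cdot)$ denote the Taylor polynomial of $f(s,\cdot)$ at the origin of degree $N:=[\beta]$ (or $\beta-1$ if $\beta\in\mN$), the integral form of the Taylor remainder, combined with the fractional regularity of $\nabla^{[\beta]} f$, gives the \emph{global} pointwise bound
\begin{align*}
|f(s,w)-T_N f(s,\cdot)(w)|\leq c\,\|f(s,\cdot)\|_{\bC^\beta}\,|w|^\beta,\qquad w\in\mR^d.
\end{align*}
Inserting this into the previous display and using the moment cancellation,
\begin{align*}
|\Delta_j P_{s,t} f(s, 0)|\leq c\,\|f(s,\cdot)\|_{\bC^\beta}\int_{\mR^d}|w|^\beta\,|\Delta_j p_{s,t}(w)|\,dw.
\end{align*}

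For $\beta\in[0,\alpha)$, integrating in $s$ over $[0,t]$ and invoking \eqref{CruF} of \autoref{CRU} directly yields $\int_0^t|\Delta_j P_{s,t}f(s,0)|\,ds\leq c'2^{-j(\alpha+\beta)}\|f\|_{\mL_T^\infty(\bC^\beta)}$, which is the claim. When $\beta\ge\alpha$, one fixes any $\beta'\in[0,\alpha)$, writes $\Delta_j=\Delta_j\widetilde\Delta_j$ from \eqref{DE}, applies the previous step to $\widetilde\Delta_j f$ (as an element of $\bC^{\beta'}$), and then combines with the Besov/Bernstein bound $\|\widetilde\Delta_j f\|_{\bC^{\beta'}}\lesssim 2^{-j(\beta-\beta')}\|f\|_{\bC^\beta}$, which follows from \autoref{DF} and \eqref{Bernstein}, so as to recover the same exponent $2^{-j(\alpha+\beta)}$. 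The main subtlety lies in insisting on the integral (rather than Lagrange) form of the Taylor remainder, so that the pointwise control $|f(s,w)-T_N f(s,\cdot)(w)|\lesssim\|f(s,\cdot)\|_{\bC^\beta}|w|^\beta$ is genuinely global in $w$; the Taylor polynomial itself is typically unbounded, and without this global form the cancellation argument would be useless for large $|w|$. Once this is in place, the lemma becomes a direct corollary of \eqref{CruF}.
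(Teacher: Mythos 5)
Your proof is correct, but it takes a genuinely different (and noticeably heavier) route than the paper's. The paper's argument is one line: using the identity $\Delta_j=\Delta_j\widetilde\Delta_j$ from \eqref{DE} and the self-adjointness of $\Delta_j$, it writes
$\Delta_jP_{s,t}f(s,0)=\int_{\mR^d}\widetilde\Delta_jp_{s,t}(x)\,\Delta_jf(s,x)\,\dif x$,
bounds this by $\|\Delta_jf(s)\|_\infty\int|\widetilde\Delta_jp_{s,t}(x)|\dif x$, then invokes the Besov characterization of H\"older spaces (\autoref{DF}) to get $\|\Delta_jf(s)\|_\infty\lesssim 2^{-j\beta}\|f(s)\|_{\bC^\beta}$ and \eqref{CruF} with $\beta=0$ (applied to $\widetilde\Delta_j p_{s,t}$) to get $\int_0^t\int|\widetilde\Delta_jp_{s,t}|\dif x\dif s\lesssim 2^{-j\alpha}$; the product gives the claim for \emph{all} $\beta\ge0$ at once. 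You instead leave $f$ untouched and put all the frequency localization on $p$, replacing the Besov characterization of $f$ by the vanishing-moment / Taylor-remainder mechanism applied to $f$. The two approaches are dual: the paper transfers the regularity of $f$ into decay of $\|\Delta_j f\|_\infty$ and only needs \eqref{CruF} at $\beta=0$; you transfer the oscillation of $\Delta_j p_{s,t}$ into a weight $|w|^\beta$ and therefore need \eqref{CruF} over the full range $\beta\in[0,\alpha)$, which then forces the separate reduction step for $\beta\ge\alpha$. Both work, but the paper's version is shorter and uniform in $\beta$. One small overclaim in your write-up: "$\int w^\gamma\Delta_jp_{s,t}(w)\dif w=0$ for every multi-index $\gamma$" is not literally true, since $p_{s,t}$ has only finite moments of order $<\alpha$ (the $\alpha$-stable tail $|x|^{-d-\alpha}$) and these high-order moments are not absolutely convergent; you should restrict to $|\gamma|\le[\beta]<\alpha$, which is in fact all your Taylor argument uses, so the proof itself is unaffected.
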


\begin{proof}
    By \eqref{DE}, \eqref{CruF} and \autoref{DF}, we have
    \begin{align*}
    \int^t_0|\Delta_j P _{s,t} f|(s,0)\dif s 
    &=\int^t_0 \left| \int_{\mR^d}\widetilde\Delta_j p_{s,t}(x)\cdot \Delta_j f(s,x)\dif x\right| \dif s\\
    &\lesssim \int_{0}^t \|\Delta_j f(s)\|_{\infty}  \int_{\mR^d} | \widetilde\Delta_j p_{s,t}(x)|\dif x \dif s
    \lesssim 2^{-(\alpha+\beta) j}\|f\|_{ \mL^\infty_T ( \bC^{\beta})}.
    \end{align*} 
    Thus, we get \eqref{BM03}.
\end{proof}

Now we are in a position to give

\begin{proof}[Proof of \autoref{CO}]
    By \eqref{DH}, \autoref{LeC06}, \autoref{LeC07} and \autoref{LeC08}, we have
    \begin{align*}
    |\Delta_j u(t,\theta_t)|=|\Delta_j \tilde u(t,0)|\lesssim 2^{-(\alpha+\beta)j} \( \|u\|_{\mL^\infty_T(\bC^{\alpha+\beta-\eps})}+\|f\|_{\mL^\infty_T(\bC^{\beta})}\),
    \end{align*}
    for some $ \eps\in(0,\beta\wedge(\alpha+\beta-1)) $. By taking the supremum of $ x_0 $ and \autoref{HWZ}, we obatin
    \begin{align}\label{eq410}
    \|\Delta_j u(t)\|_\infty \lesssim  2^{-(\alpha+\beta)j} \( \|u\|_{\mL^\infty_T(\bC^{\alpha+\beta-\eps})}+\|f\|_{\mL^\infty_T(\bC^{\beta})}\),
    \end{align}
    which in turn implies \eqref{co} by the interpolation inequality $\|u\|_{\mL^\infty_T(\bC^{\alpha+\beta-\eps})}\le \eps\|u\|_{\mL^\infty_T(\bC^{\alpha+\beta})}+c_\eps\|u\|_{\mL^\infty_T}$ for any $\eps\in(0,1)$ and some constant $c_\eps$, the maximum principle \autoref{MM}, and \autoref{DF}. The proof is completed.
 \end{proof}

\begin{remark}\label{R38}
The restriction of $\alpha\in(1/2,2)$ is only used in \autoref{LeC07}, which is caused by the moment problem due to $1-\alpha<\alpha$. Since we consider classical solutions, $\alpha+\beta$ must be larger than $1$ so that $\nabla u$ is meaningful. In addition, we  shall  assume $\beta<\alpha$ due to the moment estimate (see \autoref{CRU}). The critical case $\alpha+\beta=1$ is a technical problem, and we have no ideas to fix it.
\end{remark}

\subsection{Unbounded drift case}\label{BS}
In this subsection, we use a cutoff technique depending on characterization curve making unbounded drift bounded to prove \autoref{SFA}. We first establish a commutator estimate. 

\begin{lemma}\label{COM}
	Let $\alpha\in(0,2)$, $\gamma\in(0,1]$ and $\beta\in(0,(\alpha\wedge 1)\gamma)$. Under conditions {\bf (H$^{\beta}_\kappa$)} and {\bf (H$^{\gamma}_{\sigma}$)}, for any $T>0$, there is a constant $c>0$ such that for any $u\in\mL^\infty_T(\bC^\alpha)$,
	\begin{align*}
	\left \| [\chi,\sL^{(\alpha)}_{\kappa,\sigma}]u \right \|_{\mL^\infty_T(\bC^\beta)} \leq c\|u\|_{\mL^\infty_T(\bC^\alpha)}.
	\end{align*}
\end{lemma}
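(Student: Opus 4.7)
The plan is to reduce the commutator to a pointwise algebraic expression and then bound the two resulting pieces. Direct calculation, using $\nabla(\chi u)=u\,\nabla\chi+\chi\nabla u$ and cancelling the terms $\chi(x)\,\sigma(x)z^{(\alpha)}\!\cdot\!\nabla u(x)$ appearing in $\chi\,\sL^{(\alpha)}_{\kappa,\sigma} u$ and in $\sL^{(\alpha)}_{\kappa,\sigma}(\chi u)$, gives the identity
\begin{align*}
[\chi,\sL^{(\alpha)}_{\kappa,\sigma}]u(t,x)=-I(t,x)-u(t,x)\,\sL^{(\alpha)}_{\kappa,\sigma}\chi(t,x),
\end{align*}
where
\begin{align*}
I(t,x):=\int_{\mR^d}\bigl(\chi(x+\sigma(t,x)z)-\chi(x)\bigr)\bigl(u(t,x+\sigma(t,x)z)-u(t,x)\bigr)\kappa(t,x,z)\,\nu^{(\alpha)}(\dif z).
\end{align*}
This splits the problem into two independent pieces.

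The second piece is straightforward. Because $\chi\in C_c^\infty$, a Taylor expansion of $\chi$ around $x$ combined with \eqref{z} gives $\sL^{(\alpha)}_{\kappa,\sigma}\chi\in L^\infty$ uniformly in $t$. Its $\bC^\beta$ regularity in $x$ follows by differencing in $x$: the three sources of $x$-dependence (the argument $x+\sigma(t,x)z$, the coefficient $\kappa(t,x,z)$, and the outer $\chi(x)$) contribute, respectively, via \textbf{(H$^{\gamma}_{\sigma}$)} together with \autoref{FuHe} (using $\beta<\gamma$), via \textbf{(H$^{\beta}_{\kappa}$)}, and via $\chi\in\bC^\infty$. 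This yields $\|\sL^{(\alpha)}_{\kappa,\sigma}\chi\|_{\mL^\infty_T(\bC^\beta)}\lesssim 1$, and since $\beta<\alpha$ we get $\|u\cdot\sL^{(\alpha)}_{\kappa,\sigma}\chi\|_{\mL^\infty_T(\bC^\beta)}\lesssim\|u\|_{\mL^\infty_T(\bC^\beta)}\lesssim\|u\|_{\mL^\infty_T(\bC^\alpha)}$.

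The main piece is $I(t,x)$. To bound the sup-norm, split $\nu^{(\alpha)}=\nu^{(\alpha)}|_{|z|\le 1}+\nu^{(\alpha)}|_{|z|>1}$. On $\{|z|\le 1\}$ we use $|\chi(x+\sigma z)-\chi(x)|\lesssim|z|$ together with $|u(x+\sigma z)-u(x)|\lesssim\|u\|_{\bC^\alpha}(|z|^\alpha\wedge|z|)$, so the integrand is dominated by $(|z|^{1+(\alpha\wedge1)}\wedge|z|^2)$ and is integrable against $\nu^{(\alpha)}$ (note $1+(\alpha\wedge 1)>\alpha$ for every $\alpha\in(0,2)$). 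On $\{|z|>1\}$ we use bounded-times-bounded, which is $\nu^{(\alpha)}$-integrable since the tail is finite. This gives $\|I\|_{\mL^\infty_T}\lesssim\|u\|_{\mL^\infty_T(\bC^\alpha)}$.

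For the $\bC^\beta$ seminorm of $I$, fix $x,y$ with $|x-y|\le 1$ and telescope the integrand of $I(t,x)-I(t,y)$ into four differences, one per $x$-dependent factor: $\kappa(x,z)-\kappa(y,z)$, $\chi(x+\sigma(x)z)-\chi(y+\sigma(y)z)$, $u(x+\sigma(x)z)-u(y+\sigma(y)z)$, and $u(x)-u(y)$. Estimate these by, respectively, $|x-y|^\beta$ (from \textbf{(H$^{\beta}_{\kappa}$)}), $|x-y|+|x-y|^\gamma|z|$ (from $\chi\in C^1$ and \textbf{(H$^{\gamma}_{\sigma}$)}), $|x-y|^\beta\|u\|_{\bC^{\beta/\gamma}}$ (from \autoref{FuHe}, applicable because $\beta/\gamma<\alpha\wedge1$ by the hypothesis $\beta<(\alpha\wedge1)\gamma$), and $|x-y|^\beta\|u\|_{\bC^\beta}$. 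Each telescoped term retains one of the original \emph{smallness} factors in $z$ (either $|\chi(x+\sigma z)-\chi(x)|$ or $|u(x+\sigma z)-u(x)|$), so that after integration against $\nu^{(\alpha)}$ each piece is bounded by $|x-y|^\beta\|u\|_{\bC^\alpha}$.

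The expected main obstacle is the bookkeeping in the last step: several cross terms appear, and for each one has to arrange the telescoping so that at least one factor of $z$-smallness survives the differencing (otherwise the integral over $|z|\le 1$ diverges near the origin). The constraint $\beta<(\alpha\wedge1)\gamma$ enters precisely here, through the exponent $\beta/\gamma$ appearing in the application of \autoref{FuHe} to the composition $x\mapsto u(x+\sigma(t,x)z)$, and is exactly what makes all remaining $z$-integrals convergent against $\nu^{(\alpha)}$.
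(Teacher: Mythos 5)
Your algebraic reduction is correct: one does have
\begin{align*}
[\chi,\sL^{(\alpha)}_{\kappa,\sigma}]u = -\int_{\mR^d}\delta_{\sigma(\cdot)z}\chi\cdot\bigl(u(\cdot+\sigma(\cdot)z)-u\bigr)\kappa(\cdot,z)\nu^{(\alpha)}(\dif z)\;-\;u\cdot\sL^{(\alpha)}_{\kappa,\sigma}\chi,
\end{align*}
and for $\alpha\in(0,1)$ your sketch goes through: the factor $|\delta_{\sigma z}\chi|\lesssim|z|\wedge 1$ alone makes every telescoped piece $\nu^{(\alpha)}$-integrable, and \autoref{FuHe} supplies the needed $|x-y|^\beta$ modulus, exactly as the paper does in its case $\alpha\in(0,1)$ where it bounds $\|\Sigma(\cdot,z)\|_{\bC^\beta}\lesssim(|z|\wedge|z|^{\beta/\gamma})\|u\|_{\bC^\alpha}$.

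The gap is in the range $\alpha\in[1,2)$. There a single factor $|z|$ of smallness is not enough: $\int_{|z|\le 1}|z|\,\nu^{(\alpha)}(\dif z)=\infty$ once $\alpha\ge 1$. Concretely, after telescoping $I(t,x)-I(t,y)$ you encounter the term
\begin{align*}
\int_{|z|\le 1}\delta_{\sigma(y)z}\chi(y)\,\bigl[\cT_z u(x)-\cT_z u(y)\bigr]\kappa(x,z)\,\nu^{(\alpha)}(\dif z),\qquad \cT_z u:=u(\cdot+\sigma(\cdot)z)-u,
\end{align*}
and the two pointwise bounds you have at your disposal, namely $|\cT_z u(x)-\cT_z u(y)|\lesssim|x-y|^\beta\|u\|_{\bC^\alpha}$ (no $z$-decay, via \autoref{FuHe}) and $|\cT_z u(x)-\cT_z u(y)|\lesssim|z|^{\alpha\wedge1}\|u\|_{\bC^\alpha}$ (no $|x-y|^\beta$), cannot be geometrically interpolated to produce simultaneously $|x-y|^\beta$ and a power $|z|^\theta$ with $1+\theta>\alpha$ when $\alpha>1$. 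This is not a bookkeeping issue you can telescope away: the factor $|\delta_{\sigma z}\chi|\lesssim|z|$ is the only smallness available in this term, and $|z|^{1}$ is not integrable near $0$ against $\nu^{(\alpha)}$ for $\alpha\ge1$.

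What the paper uses — and your proof lacks — is the operator-level interpolation inequality \autoref{INTER} (Bergh–L\"ofstr\"om), applied to the linear map $\cT_z:\bC^\vartheta\to\bC^\beta$. Combining $\|\cT_z u\|_{\bC^\beta}\lesssim|z|\,\|u\|_{\bC^{1+\beta/\gamma}}$ with $\|\cT_z u\|_{\bC^\beta}\lesssim\|u\|_{\bC^{\beta/\gamma}}$ yields, for a suitable $\theta\in(\alpha-1,\alpha-\beta/\gamma)$,
\begin{align*}
\|\cT_z u\|_{\bC^\beta}\lesssim |z|^{\theta}\,\|u\|_{\bC^{\theta+\beta/\gamma}}\le|z|^\theta\|u\|_{\bC^\alpha},
\end{align*}
which then gives the missing joint decay $|x-y|^\beta\cdot|z|^{1+\theta}$ with $1+\theta>\alpha$. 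The hypothesis $\beta<(\alpha\wedge1)\gamma$ is precisely what makes the window $(\alpha-1,\alpha-\beta/\gamma)$ nonempty. You should incorporate this interpolation step to complete the case $\alpha\ge 1$; note also that the paper never passes through $\sL^{(\alpha)}_{\kappa,\sigma}\chi$ as a standalone object, but instead groups the gradient correction with $\delta_{\sigma z}\chi$ as $\int_0^1\delta_{r\sigma z}\nabla\chi\,\dif r$, which is a cleaner way to retain the $|z|^2$ Taylor cancellation while differencing in $x$.
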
 
The definition of the notation $[\cdot,\cdot]$ can be found at the end of introduction and $\chi$ is defined by \eqref{CHI}.
\begin{proof}
	Rewrite
	\begin{align}\label{W-17A}
	[\chi,\sL^{(\alpha)}_{\kappa,\sigma}]u(t,x)
    =\int_{\mR^d}\Sigma(t,x,z)\kappa(t,x,z)\nu^{(\alpha)}(\dif z),
	\end{align}
where
\begin{align*}
\Sigma(t,x,z):=\delta_{\sigma(t,x)z}\chi(x)u(t,x+\sigma(t,x)z)-u(t,x)\sigma(t,x)z^{(\alpha)}\cdot\nabla\chi(x)
\end{align*}
 with $ z^{(\alpha)}:=z\1_{\alpha\in(1,2)}+z\1_{|z|\leq1} \1_{\alpha=1}$ and the definition of the notation $\delta_hf$ is defined in the beginning of Subsection \ref{BHs}. For simplicity of notation, we drop the time variable $t$ and the superscript $ \alpha $ of $ \nu^{(\alpha)} $. As the proof of \autoref{LeC06}, we split the integral \eqref{W-17A} over areas $\{z\in \mR^d \mid |z|\le1\}$ and $\{z\in \mR^d \mid |z|>1\}$, that is  $[\chi,\sL^{(\alpha)}_{\kappa,\sigma}]u(t,x)=J_1+J_2$ with
\begin{align*}
J_1(x):=\int_{|z|\le1}\Sigma(x,z)\kappa(x,z)\nu(\dif z)\ \ \hbox{and}\ \   J_2(x):=\int_{|z|>1}\Sigma(x,z)\kappa(x,z)\nu(\dif z),
\end{align*}
where $J_1$ and $J_2$ are called the small jump part and the large jump part, respectively. Since $z^{(\alpha)}$ has different forms in cases $\alpha<1$, $\alpha>1$ and $\alpha=1$, we estimate $J_1,J_2$ for these cases separately.	
Here, the key to estimate $\|\cdot\|_{\bC^\beta}$ norms of those integrals is the following fact
 \begin{align*}
\|\int g(\cdot,z)\dif z\|_{\bC^\beta}\le\int\|g(\cdot,z)\|_{\bC^\beta}\dif z.
\end{align*}
\begin{enumerate}[(1)]
\item {\bf Case: $ \alpha\in(0,1) $}. By \autoref{FuHe}, {\bf (H$^{\gamma}_\sigma$)}, and the fact $\|fg\|_{\bC^\beta}\le\|f\|_{\bC^\beta}\|g\|_{\bC^\beta}$, we derive that
\begin{align}\label{BA}
\begin{split}
\|\Sigma(\cdot,z)\|_{\bC^\beta}\lesssim (|z|\wedge|z|^{\beta/\gamma}) \|u\|_{\bC^\alpha},
\end{split}
\end{align}
where we used 
\begin{align}\label{SJE}
\Sigma(x,z)=(z\cdot\int_0^1\nabla\chi(x+s\sigma(x)z)\dif s\)u(x+\sigma(x)z)
\end{align}
for $|z|\leq 1$. Therefore, by {\bf (H$^{\beta}_{\kappa}$)} and \eqref{z} with $0<\beta/\gamma< \alpha<1$,  we obtain
\begin{align*}
\|J_1\|_{\bC^\beta}+\|J_2\|_{\bC^\beta}
\lesssim \|u\|_{\bC^\alpha} \int_{\mR^d}|z|\wedge|z|^{\beta/\gamma}\nu (\dif z)
\lesssim \|u\|_{\bC^\alpha},
\end{align*}
which in turn gives the desired result.
\item {\bf Case: $\alpha\in(1,2)$}. For the large jump part $J_2$,  rewrite 
\begin{align}\label{SJE}
\Sigma(x,z)=(z\cdot\int_0^1\nabla\chi(x+s\sigma(x)z)\dif s\)u(x+\sigma(x)z)-u(x)\sigma(x)z\cdot\nabla\chi(x).
\end{align}
Noticing $0< \beta/\gamma< 1<\alpha $, by \autoref{FuHe}, {\bf (H$^{\beta}_\kappa$)}, and {\bf (H$^{\gamma}_\sigma$)},  we have
$$
\|J_2\|_{\bC^\beta}
\lesssim\|u\|_{\bC^\alpha}\int_{|z|>1}(|z|^{\beta/\gamma}+|z|)\nu^{(\alpha)}(\dif z)\lesssim \|u\|_{\bC^\alpha}.
$$
For $J_1$, we need the interpolation inequality \autoref{INTER}. Let 
$$
\cT_z u(x):=u(x+\sigma(x)z)-u(x).
$$
By \autoref{FuHe}, we have,
\begin{align*}
\|\cT_z u\|_{\bC^\beta}=\|z\cdot\int_0^1\nabla u(\cdot+r\sigma(\cdot)z)\dif r\|_{\bC^\beta}
\lesssim\|u\|_{\bC^{1+\beta/\gamma}}|z|.
\end{align*}
We also have		
\begin{align*}
\|\cT_z u\|_{\bC^\beta}\leq \| u(\cdot+\sigma(\cdot)z)\|_{\bC^\beta}+\|u\|_{\bC^\beta} \lesssim  \|u\|_{\bC^{\beta/\gamma}}.
\end{align*}
Choose some $\theta\in(\alpha-1,\alpha-\beta/\gamma)$ such that $\vartheta:=\theta(1+\frac{\beta}{\gamma})+(1-\theta)\frac{\beta}{\gamma}<\alpha$. Then, by \autoref{INTER}, we obtain that
\begin{align}\label{BD}
\|\cT_z u\|_{\bC^\beta}\lesssim |z|^\theta\|u\|_{\bC^\vartheta}\le|z|^\theta\|u\|_{\bC^\alpha}.
\end{align}
Since
\begin{align*}		
\Sigma(x,z)&=\(\sigma(x)z\cdot\int_0^1 \delta_{r\sigma(x)z}\nabla \chi(x) \dif r\) u(x+\sigma(x)z)+\sigma(x)z\cdot\cT_z u(x),
\end{align*}
by \eqref{BD} and \autoref{FuHe}, we obtain that 
\begin{align*}
\|J_1\|_{\bC^\beta}
&\lesssim \int_{|z|\le1}\|(\chi(\cdot+\sigma(\cdot)z)-\chi)u(\cdot+\sigma(\cdot)z)-u\sigma z\cdot\nabla\chi\|_{\bC^\beta}\nu^{(\alpha)}(\dif z)\\
&\lesssim \|u\|_{\bC^\alpha} \int_{|z|\le1}(|z|^2+|z|^{1+\theta})\nu^{(\alpha)}(\dif z)
\lesssim\|u\|_{\bC^\alpha}.
\end{align*} 
\item {\bf Case: $ \alpha=1$}. Observe that for the case $ \alpha=1 $, $J_1$ is same as the case of $\alpha\in(1,2)$ and $J_2$ is same as the case of $\alpha\in(0,1)$.
\end{enumerate} 
Combining the above calcultions, we complete the proof.
\end{proof}	

Now we are in a position to give

\begin{proof}[Proof of \autoref{SFA}]
    Fix $x_0\in\mR^{d}$. Let $\theta_t$, $
    \tilde u$, $\tilde f$, $\tilde \sigma$, 
    $\tilde \kappa$, $\tilde \sigma_0$, $\tilde \kappa_0$  and $\tilde b$
 be the same ones in Subsection \ref{UnS}.  See that $ \tilde \kappa $ and $ \tilde \sigma $ still satisfy {\bf (H$^{\beta}_{\kappa}$)} and {\bf (H$^{\gamma}_{\sigma}$)} respectively. The only difference we shall note is that $b\notin\mL^\infty_{loc}(\bC^\beta)$ here. We use the cutoff technique to fix this problem below. By \eqref{FE}, it is easy to see that $\tilde u$ satisfies the following equation:
    \begin{align}\label{EM0}
    \p_t \tilde u=\sL^{(\alpha)}_{\tilde \kappa,\tilde \sigma} \tilde u+ \tilde b\cdot\nabla \tilde u+\tilde f.
    \end{align}
Observe that
    \begin{align}\label{AM04}
    \p_t (\chi \tilde u)=\sL^{(\alpha)}_{\tilde  \kappa, \tilde \sigma} (\chi \tilde  u)+(\chi   \tilde b)\cdot \nabla (\chi \tilde u)+\chi  \tilde f+[\chi,\sL^{(\alpha)}_{\tilde \kappa,\tilde \sigma}] \tilde  u-(\chi \tilde b)\tilde u\cdot \nabla\chi,
    \end{align}
    where $ \chi $ is defined by \eqref{CHI}. Moreover, by \autoref{CutF} and {\bf (H$^{\beta}_{b}$)}, we have $ \chi \tilde b(t,\cdot) \in \bC^\beta $ and 
    \begin{align}\label{AM03}
    \|(\chi \tilde  b) (t,\cdot)\|_{\bC^\beta}\lesssim [b(t,\cdot)]_{\mC^\beta}\leq c_0.
    \end{align}
    Thus, concluding form \autoref{CO}, \eqref{AM03}, and \autoref{COM}, for any $t\le T$, we obtain that
    \begin{align*}
    \begin{split}
    \|\chi \tilde u(t,\cdot)\|_{\bC^{\alpha+\beta}}
    &\lesssim \|\chi \tilde f\|_{\mL_T^\infty(\bC^{\beta})}+\|[\chi,\sL^{(\alpha)}_{\tilde \kappa,\tilde \sigma}] \tilde  u\|_{\mL_T^\infty(\bC^{\beta})}+\|(\chi \tilde b)\tilde u\cdot \nabla\chi\|_{\mL_T^\infty(\bC^{\beta})}\\
    &\lesssim \|f\|_{\mL_T^\infty(\bC^{\beta})}+\|u\|_{\mL_T^\infty(\bC^{\alpha})}+\|u\|_{\mL_T^\infty(\bC^{\beta})}.
    \end{split}
    \end{align*}   
    Noticing that, for any $ k\in \mN_0 $, 
    $$
    \nabla^k (\chi \tilde u)(t,x) =  \nabla^k \tilde u(t,x),\ \ \forall |x|\leq 1/4, \,  t\in \mR_+,
    $$
and for any $t>0$, 
    \begin{align*} 
  \|u(t,\cdot ) \|_{\bC^{\alpha+\beta}(B(\theta_t,1/4))}\leq \|(\chi \tilde u)(t,\cdot)\|_{\bC^{\alpha+\beta}}
    \end{align*} 
with $B(\theta_t,1/4) = \{x\in \mR^d \mid  |x- \theta_t|\leq 1/4\}$.
Therefore, by \autoref{HB}, \autoref{HWZ}, and taking supremum of $x_0$, we get
    \begin{align*}
    \|u\|_{\mL_T^\infty(\bC^{\alpha+\beta})}
    \lesssim \|f\|_{\mL_T^\infty(\bC^{\beta})}+\|u\|_{\mL_T^\infty(\bC^{\alpha})}+\|u\|_{\mL_T^\infty(\bC^{\beta})}.
    \end{align*} 
    Furthermore, by interpolations and the maximum principle \autoref{MM}, we have
    \begin{align*}
    \|u\|_{\mL^\infty_T(\bC^{\alpha+\beta})}\lesssim \|f\|_{\mL^\infty_T(\bC^\beta)}.
    \end{align*}
    The proof is completed.
\end{proof}

\begin{remark}\label{R310}
Observe that, by \autoref{HB},
$$
|\tilde b (t,x)|\leq [b(t,\cdot)]_{\mC^\beta} (|x|^{\beta}\1_{\{|x|\leq 1\}}+|x|\1_{\{|x|>1\}}).
$$
Hence, if we have the following estimate  
\begin{align}\label{DR}
\int_0^t\int_{\mR^d}|x||\nabla\Delta_jp_{s,t}(x)|\dif x\dif s\lesssim 2^{-j\alpha},
\end{align}
 then we get \autoref{LeC07} under  {\bf (H$^{\beta}_{b}$)} with $\beta\in((1-\alpha)\vee0,\gamma)$. Furthermore, we get \autoref{SEA} directly without using cutoff techiques as showned in Subsection \ref{UnS}. Fortunately, if $\alpha \in (1,2)$ or $\sL^{(\alpha)}_{\kappa,\sigma}=\Delta^{\alpha/2}$ with $\alpha\in(0,1)$, \eqref{DR} is true.
\end{remark}


\section{Proof of \autoref{SEA}}\label{EX}

 Since we have the a priori estimate \autoref{SFA}, the rest of proof for  \autoref{SEA} is existence of the classical solution.  We first  introduce the following useful lemma.

\bl\label{SS}
Let $\alpha\in(0,2)$ and $\beta\in((1-\alpha)\vee0,1\wedge\alpha)$ with $\alpha+\beta\notin\mN$. Under conditions {\bf (H$^\beta_\kappa$)} and  {\bf (H$^1_\sigma$)}, there is a constant $c>0$ such that for any $T>0$ and $u\in \mL_{loc}^\infty(\bC^{\alpha+\beta})$,
\begin{align*}
\|\sL_{\kappa,\sigma}^{(\alpha)} u\|_{\mL^\infty_T(\bC^\beta)}\le c\|u\|_{\mL^\infty_T(\bC^{\alpha+\beta})}.
\end{align*}
\el

\bpf
For the simplicity, we only consider the case $0<\alpha<1$, and drop the time variable $t$ and the superscript $\alpha$ of $\nu^{(\alpha)}$ and $\sL_{\kappa,\sigma}^{(\alpha)}$. In this case, by \eqref{LL},
\begin{align}
\sL_{\kappa,\sigma}u(x)=\int_{\mR^d}(u(x+\sigma(x)z)-u(t,x))\kappa(x,z)\nu(\dif z).
\end{align}
For any fixed $x_0,h\in\mR^d$, define
$$
\kappa_0(x,z):\equiv\kappa(x_0,z),\quad \sigma_0(x):\equiv\sigma(x_0).
$$
 Notice that
\begin{align*}
\sL_{\kappa,\sigma}u(x_0+h)-\sL_{\kappa,\sigma}u(x_0)
&=\sL_{\kappa,\sigma}u(x_0+h)-\sL_{\kappa_0,\sigma}u(x_0+h)\\
&+\sL_{\kappa_0,\sigma}u(x_0+h)-\sL_{\kappa_0,\sigma_0}u(x_0+h)\\
&+\sL_{\kappa_0,\sigma_0}u(x_0+h)-\sL_{\kappa,\sigma}u(x_0)\\
&:=\sI_1+\sI_2+\sI_3.
\end{align*}
For $\sI_1$, under {\bf (H$^\beta_\kappa$)} and {\bf (H$^1_\sigma$)}, by \eqref{z}, we have
\begin{align*}
|\sI_1|\lesssim \|\kappa\|_{\bC^\beta}|h|^\beta \int_{\mR^d}\[(\|\sigma\|_{L^\infty}\|u\|_{\bC^1}|z|)\wedge\|u\|_{L^\infty}\]\nu(\dif z)\lesssim\|u\|_{\bC^{\alpha+\beta}}|h|^\beta.
\end{align*}
For $\sI_2$, under {\bf (H$^1_\sigma$)} and $0< \beta<\alpha<1$, we have
 \begin{align*}
|u(x_0+h+\sigma(x_0+h)z)-u(x_0+h+\sigma(x_0)z)|
\leq |h|^\beta\[ (\|u\|_{\bC^1}|z|)\wedge(\|u\|_{\bC^\beta}|z|^\beta)\],
\end{align*}
Hence, by \eqref{z} and {\bf (H$^\beta_\kappa$)}, we get
\begin{align*}
|\sI_2|\lesssim \|u\|_{\bC^1}|h|^\beta \int_{\mR^d}(|z|\wedge|z|^\beta)\nu(\dif z)\lesssim \|u\|_{\bC^{\alpha+\beta}}|h|^\beta.
\end{align*}
For $\sI_3$, we use the block operator $\Delta_j$. Define 
$$
\sL_0 u(x):=\int_{\mR^d}\(u(x_0+x+\sigma(x_0)z)-u(x_0+x)\)\kappa(x_0)\nu(\dif z).
$$
Note that $|f(x+h)-f(x)|\leq (\|\nabla f\|_{L^\infty}|z|)\wedge (2\|f\|_\infty)$. For any $j\ge0$, by Bernstein's inequality \eqref{Bernstein} and $\nu(\dif \lambda z) = \lambda^\alpha \nu(\dif z)$ with $\lambda>0$, we have
 \begin{align*}
|\Delta_j\sL_0 u(x)|&=\left|\int_{\mR^d}\(\Delta_j u(x_0+x+\sigma(x_0)z)-\Delta_ju(x_0+x)\)\kappa(x_0)\nu(\dif z)\right|\\
&\lesssim \int_{\mR^d}(|\sigma(x_0)z|\|\nabla\Delta_j u\|_{L^\infty})\wedge\|\Delta_j u\|_{L^\infty}\|\kappa\|_{L^\infty}\nu(\dif z)\\
&\lesssim  \|\Delta_j u\|_{L^\infty} \int_{\mR^d}(|2^jz|\wedge1)\nu(\dif z)\lesssim 2^{\alpha j}\|\Delta_j u\|_{L^\infty} ,
\end{align*}
which implies that $\|\sL_0 u\|_{\bC^{\beta}}\lesssim \|u\|_{\alpha+\beta}$. Thus, we have
 \begin{align*}
|\sI_3|=|\sL_0 u(h)-\sL_0 u(0)|\lesssim |h|^\beta\|u\|_{\alpha+\beta},
\end{align*}
which completes the proof.
\epf

Now, we are in a position to give

\begin{proof}[Proof of \autoref{SEA}]
\noindent
\begin{enumerate}
\item[({\it Step 1})]
Suppose that $b$ is bounded and $\sigma$ is Lipschitz in this step. Consider the following continuity equation:
 \begin{align}\label{ConE}
\p_tu=\lambda \sL_{1,\sigma}^{(\alpha)} u+(1-\lambda)\sL_{\kappa,\sigma}^{(\alpha)} u+b\cdot\nabla u+f,\quad u(0)=0.
\end{align}
When $\lambda=1$, by the same argument  as in \cite{Ch-Ha-Zh}*{Section 5} and \autoref{SFA}, there is a unique classical solution for Eq.\eqref{ConE}. Using the continuity method, by \autoref{SS} and \autoref{SFA}, we get a classical solution $u$ for Eq.\eqref{ConE} with $\lambda=0$ that is PDE \eqref{in-PDE}.
\item[({\it Step 2})]
For any $n\in\mN$ and $(t,x)\in\mR_+\times\mR^d$, let $b_n(t,x):=b(t,x)\wedge n$ and $\sigma_n(t,x):=\sigma(t)*\rho_{n}(x)$ where $\rho_n$ is the usual modifier. By Step 1, there is a classical solution $u_n$ of PDE \eqref{in-PDE} with $b=b_n$ and $\sigma=\sigma_n$, i.e.,
\begin{align}\label{F1.2}
\p_t u_n=\sL_{\kappa,\sigma_n}^{(\alpha)} u_n +b_n\cdot\nabla u_n+f,\quad u_n(0)=0.
\end{align}
Noting that $[b_n(t)]_{\mC^\beta}\le[b(t)]_{\mC^\beta}$ and $\|\sigma_n(t)\|_{\bC^\gamma}\le\|\sigma(t)\|_{\bC^\gamma}$, by \autoref{SFA}, there is a constant $c$ such that for all $n\in\mN$,
\begin{align}\label{W-K18}
\|u_n\|_{\mL^\infty_T(\bC^{\alpha+\beta})}\le  c\|f\|_{\mL^\infty_T(\bC^\beta)}.
\end{align}
Moreover, by \autoref{SS}, we have
\begin{align}\label{W-K01}
\sup_n\|\sL^{(\alpha)}_{\kappa,\sigma_n}u_n\|_{\mL_T^\infty}\lesssim\sup_n\|u_n\|_{\mL^\infty_T(\bC^{\alpha+\beta})}\leq c\|f\|_{\mL^\infty_T(\bC^{\beta})}.
\end{align} 
Thus, combining \eqref{F1.2} with the above inequality, for any $M>0$ and $s,t\in [0,T]$,
\begin{align*}
 \|u_n(t)-u_n(s)\|_{L^\infty(B_M)}&\lesssim |t-s| (1+\|b\|_{\mL^\infty_T(B_M)}) \|f\|_{\mL^\infty_T(\bC^\beta)}\\
 &\lesssim|t-s| \(1+\sup_{t\in\mR_+}|b(t,0)|M\) \|f\|_{\mL^\infty_T(\bC^\beta)}.
\end{align*}
Therefore, by Ascolli-Arzela's theorem, there is a function $u\in \cap_{M>0}C(\mR_+;\bC^{(\alpha\vee1)+\eps}(B_M))$ and a  subsequence $ \{n_k\}_{k\geq 1} $ such that for all $t\in[0,T]$ and $M>0$,
\begin{align}\label{Asco}
\lim_{j\to +\infty}\sup_{|x|\leq M}|\nabla^m u_{n_j}(t,x)-\nabla^m u(t,x)|=0,\ \  m=0,1.
\end{align}
For the convenience, we drop the subscript $ k $ of $n_k$. By definitions and \eqref{W-K18}, we have
\begin{align*}
\|u\|_{\mL_T^\infty(\bC^{\alpha+\beta})}&=\sup_{t\in[0,T]}\sup_{j\ge0}2^{(\alpha+\beta)j}\|\Delta_j u(t)\|_{L^\infty}=\sup_{t\in[0,T]}\sup_{j\ge0}2^{(\alpha+\beta)j}\|\lim_{n\to\infty}\Delta_ju_n(t)\|_{L^\infty}\\
&\le\sup_{t\in[0,T]}\sup_{j\ge0}2^{(\alpha+\beta)j}\sup_{n}\|\Delta_ju_n(t)\|_{L^\infty}\le\sup_n\|u_n\|_{\mL_T^\infty(\bC^{\alpha+\beta})}\le c\|f\|_{\mL_T^\infty(\bC^{\beta})}.
\end{align*}
Notice that
\begin{align}\label{last}
u_n(t,x)=\int_0^t\(\sL^{(\alpha)}_{\kappa,\sigma_n}u_n(s,x)+b_n(s,x)\cdot\nabla u_n(s,x)+f(s,x)\)\dif s
\end{align}
with $\sup_n\|\sL^{(\alpha)}_{\kappa,\sigma_n}u_n\|_{\mL_T^\infty} \lesssim \|f\|_{\mL^\infty_T(\bC^{\beta})}$ and 
$$
\sup_{t\in\mR_+,n\in\mN}|b_n(t,x)|\le\sup_{t\in\mR_+}|b(t,x)|\le\sup_{t\in\mR_+}|b(t,0)|+2|x|\[b(t)\]_{\mC^\beta}<+\infty, \forall x\in\mR^d.
$$
Letting $n\to\infty$ in \eqref{last}, by \eqref{Asco} and the dominated convergence theorem  we have
\begin{align*}
u(t,x)=\int_0^t\(\sL^{(\alpha)}_{\kappa,\sigma}u(s,x)+b(s,x)\cdot\nabla u(s,x)+f(s,x)\)\dif s.
\end{align*}
Hence, the function $u$ is a classical solution of PDE \eqref{FE} in the sense of \autoref{DEF}.
\end{enumerate}
The proof is finished.
\end{proof}


\paragraph{\bf Acknowledgements} We are grateful to St\'ephane Menozzi, Xicheng Zhang and Guohuan Zhao for their useful suggestions for this paper.

\end{document}